\documentclass[11pt]{article}
\usepackage{cite}
\usepackage{mathrsfs}
\usepackage{amsfonts}
\usepackage{amsmath}
\usepackage{amsfonts,amssymb,color}
\usepackage{dsfont}
\usepackage{curves}
\usepackage{mathrsfs}
\usepackage{pifont}
\usepackage{amssymb}
\usepackage{latexsym,amsmath,amssymb,amsfonts,epsfig,graphicx,cite,psfrag}
\usepackage{eepic,color,colordvi,amscd}
\usepackage{hyperref}

\usepackage{enumitem}
\newtheorem{theorem}{Theorem}[section]
\newtheorem{proposition}{Proposition}[section]

\newtheorem{lemma}{Lemma}[section]

\newtheorem{problem}{Problem}[section]
\newtheorem{claim}{Claim}[section]
\newtheorem{conjecture}{Conjecture}[section]

\newcommand{\qed}{\hfill\rule{0.5em}{0.809em}}

\def\emptyset{\mbox{{\rm \O}}}

\newenvironment{proof}{
	\noindent {\it Proof.}\rm}
{\mbox{}\hfill\rule{0.5em}{0.809em}\par}

\def\qed{\hfill \rule{0.5em}{0.809em}}
\begin{document}
	
\title{Some properties of minimally nonperfectly divisible graphs}%Perfect weighted divisibility is equivalent to perfect divisibility}
	
	\author{Qiming Hu\footnote{Email: 865550832@qq.com }, \;Baogang  Xu\footnote{Email: baogxu@njnu.edu.cn. Supported by 2024YFA1013902},\; Miaoxia Zhuang\footnote{Corresponding author: 19mxzhuang@alumni.stu.edu.cn }\\\\
		\small Institute of Mathematics, School of Mathematical Sciences\\
		\small Nanjing Normal University, 1 Wenyuan Road,  Nanjing, 210023,  China}
	\date{}
%Corresponding author.
\maketitle

\begin{abstract}

A graph is perfectly divisible if for each of its induced subgraph $H$, $V(H)$ can be partitioned into $A$ and $B$ such that $H[A]$
is perfect and $\omega(H[B]) < \omega(H)$, and a graph $G$ is perfectly weight divisible if for every positive integral weight function on $V(G)$ and each of its induced subgraph $H$, $V(H)$ can be partitioned into $A$ and $B$ such that $H[A]$
is perfect and the maximum weight of
a clique in $H[B]$ is smaller than the maximum weight of a clique in $H$. A clique $X$ of a connected graph $G$ is called a clique cutset if $G-X$ is disconnected. In this paper, we investigate the relationship between the perfect divisibility of a graph and its perfect weighted divisibility. We also show that $2P_3$-free or claw-free minimally nonperfectly divisible graphs contain no clique cutset, that conditionally answers a question of Ho\`ang [Discrete Math. \textbf{349} (2025) 114809].

	\begin{flushleft}
		{\em Key words and phrases:} perfect divisibility, perfect weighted divisibility, chromatic number, clique number\\
		{\em AMS 2000 Subject Classifications:}  05C15, 05C75\\
	\end{flushleft}
	
\end{abstract}

%\newpage

\section{Introduction}
All graphs considered in this paper are finite and simple. Let $G$ be a graph, $v\in V (G)$, and let $X$ and $Y$ be two disjoint subsets of $V(G)$. As usual, we use $N_G(v)$ to denote the set of neighbors of $v$ in $G$, and let $N_G(X)=(\cup_{x\in X}N_G(x))\setminus X$ and $M_G(X)=V(G)\setminus (N_G(X)\cup X)$. We say that $v$ is {\em complete} to $X$ if $X\subseteq N(v)$, and say that $v$ is {\em anticomplete} to $X$ if $X\cap N_G(v)=\emptyset$. We say that $X$ is complete (resp. anticomplete) to $Y$ if each vertex of X is complete (resp. anticomplete) to $Y$. If $1 \textless |X| \textless |V (G)|$ and every vertex of $V(G) \setminus X$ is either complete or anticomplete to $X$, then we call $X$ a {\em homogeneous set}. We use $G[X]$ to denote the subgraph of $G$ induced by $X$, and call $X$ a \textit{clique} if $G[X]$ is a complete
graph. The {\em clique number} of $G$, denoted by $\omega(G)$, is the maximum size of cliques in $G$. If it does not cause any confusion, we usually omit the subscript $G$ and simply write $N(v)$, $M(v)$, $N(X)$, $M(X)$. For brevity, we let \(N_X(Y)\) (resp. \(M_X(Y)\)) denote \(N_{G[X]}(Y)\) (resp. \(M_{G[X]}(Y)\)), and write $x\sim y$ (resp. $x\not\sim y$) if $xy\in E(G)$ (resp. $xy\not\in E(G)$).  

We say that $G$ \textit{induces} $H$ if $G$ has an induced subgraph isomorphic to $H$, and say that $G$ is \textit{{\em $H$}-free} otherwise. Analogously, for a family ${\cal H}$ of graphs, we say that $G$ is \textit{${\cal H}$-free} if $G$ induces no member of ${\cal H}$. A clique $X$ of a connected graphs $G$ is called a {\em clique cutset} if $G-X$ is disconnected.

For integer $k$, we say that $G$ is $k$-colorable if there is a mapping $\phi$ from $V(G)$ to $\{1,2,...,k\}$ such that $\phi(u)\neq \phi(v)$ whenever $u\sim v$. The chromatic number $\chi(G)$ of $G$ is the smallest integer $k$ such that $G$ is $k$-colorable. If $\chi(H)=\omega(H)$ for each of its induced subgraph $H$, then $G$ is called a {\em  perfect graph}.

Let $H_1$ and $H_2$ be two graphs on disjoint vertex sets of size at least two, and let $v\in V(H_1)$. By {\em substituting} $H_2$ {\em for} \(v\), we mean a graph $H$ with vertex set $V(H_1-v)\cup V(H_2)$ and edge set $E(H_1-v)\cup E(H_2)\cup \{xy\;:\; x\in N(v), y\in V(H_2)\}$. It is easy to see that $V(H_2)$ is a homogeneous set of $H$. Lov\'asz proved the following theorem.
\begin{theorem}\label{thm-1}{\em \cite{Lovasz}}
Let $G$ and $H$ be perfect graphs and $x\in V(G)$. Then the graph obtained from $G$ by substituting $H$ for $x$ is perfect.
\end{theorem}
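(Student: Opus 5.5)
Since the definition of perfection used in the paper is $\chi(F)=\omega(F)$ for every induced subgraph $F$, I will prove Theorem~\ref{thm-1} directly from that definition, by induction on $|V(G)|+|V(H)|$. Let $G'$ be obtained from $G$ by substituting $H$ for $x$, and put $N=N_G(x)$. The first step is a reduction. Every induced subgraph $F$ of $G'$ is either an induced subgraph of $G-x$ (if $F$ misses $V(H)$), or is obtained from an induced subgraph $G_0\ni x$ of $G$ by substituting an induced subgraph $H_0$ of $H$ for $x$. Both $G_0$ and $H_0$ are perfect. So by induction it suffices to show $\chi(G')=\omega(G')$ for $G'$ itself. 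The degenerate case $|V(H_0)|=1$ just gives a copy of $G_0$.

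Next I will compute $\omega(G')$. A clique of $G'$ meeting $V(H)$ consists of a clique of $H$ together with a clique of $G[N]$. Hence
\[
\omega(G')=\max\{\omega(G-x),\ \omega(H)+\omega(G[N])\},
\]
and this is at most $\max\{\omega(G), \omega(G)-1+\omega(H)\}$. Set $h=\omega(H)$ and $k=\omega(G')$.

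The main obstacle is producing a $k$-coloring of $G'$. The naive approach, coloring $G$ with $\omega(G)$ colors and replacing the color of $x$ by $h$ new colors, uses $\omega(G)+h-1$ colors. That can exceed $k$ when a maximum clique of $G$ avoids $x$. I would follow Lovász's replication idea. First handle the special case where $H$ is a clique $K_h$, that is, replicating $x$ $h$ times. Here I would induct on $h$, so it suffices to show that replicating a single vertex (adding a true twin $x'$ of $x$) preserves perfection. For that step: if $x$ lies in a maximum clique of every induced subgraph concerned, then $\omega$ increases by one and we use one extra color on $x'$. Otherwise, take an $\omega(G)$-coloring of $G$ and let $S$ be the color class containing $x$. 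Then $G-(S\setminus\{x\})$ has clique number $\omega(G)-1$, because every maximum clique meets $S$ and those through $x$ were ruled out. Color it with $\omega(G)-1$ colors using perfection, and give $S\setminus\{x\}\cup\{x'\}$ a single new color, which is valid since $x'$ has no neighbor in $S\setminus\{x\}$.

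With the replication lemma in hand, I would handle general $H$ as follows. Replace $x$ by a clique $Q$ of size $h$ to get $G^*$, which is perfect by the lemma. Moreover $\omega(G^*)=k$, because cliques through $Q$ correspond exactly to cliques through $V(H)$. So $G^*$ has a $k$-coloring in which $Q$ receives $h$ distinct colors $c_1,\dots,c_h$. Finally, color $H$ with its $h$ colors by perfection of $H$, map those colors onto $c_1,\dots,c_h$, and keep the coloring on $G-x$. This is proper, because every vertex of $H$ has the same neighbors outside $H$ as the vertices of $Q$. It yields $\chi(G')\le k$.
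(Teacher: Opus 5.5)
Your proof is correct and is the classical argument due to Lov\'asz. You first prove the replication lemma: add a true twin $x'$, split on whether $x$ lies in a maximum clique, and otherwise give the independent set $(S\setminus\{x\})\cup\{x'\}$ a new color. You then replace $x$ by a clique of size $\omega(H)$, which has the same clique number as $G'$, and transfer an $\omega(H)$-coloring of $H$ onto that clique's colors.

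The paper gives no proof of this theorem and only cites Lov\'asz, so there is nothing further to compare. One wording fix: the phrase ``$x$ lies in a maximum clique of every induced subgraph concerned'' should be a case split made separately for each induced subgraph $G_0\ni x$ of $G$, which is how your argument actually runs.
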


Let $G$ be a graph, $h$ be a positive integral weight function on $V(G)$ and $X\subseteq V(G)$. We use $\omega_h(G)$ to denote the maximum weight of cliques in $G$ and simply write $\omega_h(G[X])$ as $\omega_h(X)$. Denote $h|_X$ to be a weight function induced on $X$ by $h$.

An $h$-{\em perfect division} $(A, B)$ of $G$ is a partition of $V(G)$ into $A$ and $B$ such that $G[A]$ is perfect and $\omega_h(B) < \omega_h(G)$ \cite{Hoang}. A graph $G$ is $h$-{\em perfectly divisible} if for every induced subgraph $H$ of $G$, $H$ has an $h|_{V(H)}$-perfect division. We abbreviate $h$-perfectly divisible simply as {\em perfectly divisible} if $h$ is the all-ones weight function.  A graph $G$ is {\em perfectly weight divisible} if $G$ is $h$-perfectly divisible for every positive integral weight function $h$ on $V(G)$. It is easy to see that if $G$ is perfectly weight divisible, then $G$ is perfectly divisible. For each vertex $x\in V(G)$, we define a special weight function $h_{x, 2}$ on $G$, by setting $h_{x, 2}(x)=2$ and $h_{x, 2}(v)=1$ for all $v\ne x$. We denote this family of special weight functions by ${\cal H}_2$, i.e., ${\cal H}_2=\{h_{x, 2}|x\in V(G)\}$. A graph $G$ is ${\cal H}_2$-{\em perfectly  divisible} if $G$ is $h$-perfectly divisible for every weight function $h\in{\cal H}_2$.

Let $G$ be a nonperfectly divisible graph. If each of its proper induced subgraphs is perfectly divisible, then we call $G$ a \textit{minimally nonperfectly divisible graph} (MNPD for short). Let $G$ be a nonperfectly weight divisible graph. If each of its proper induced subgraphs is perfectly weight divisible, then we call $G$ a \textit{minimally nonperfectly weight divisible graph} (MNPWD for short).

As a powerful inductive tool, the existence of a homogeneous set plays an important role in studying the perfect divisibility of graphs, see \cite{2-divisible2019,Hoang,DXX2022,SX2024,WX2024,XZ2025--}. The following lemma has been proved on perfect weight divisibility of graphs.

\begin{lemma}\label{homoset}{\em\cite{2-divisible2019,Hoang}}
	A minimally nonperfectly weight divisible graph has no homogeneous sets.
\end{lemma}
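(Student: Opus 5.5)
We argue by contradiction. Let $G$ be a minimally nonperfectly weight divisible graph and suppose it has a homogeneous set $X$, with $1<|X|<|V(G)|$. Partition $V(G)\setminus X$ into $N=N(X)$ (complete to $X$) and $M=M(X)$ (anticomplete to $X$).

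Since $G$ is not perfectly weight divisible but every proper induced subgraph is, there is a positive integral weight function $h$ such that $G$ itself has no $h$-perfect division. Indeed, any failing induced subgraph must be $G$, because proper ones are perfectly weight divisible. The goal is to build such a division, which gives the contradiction.

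\textbf{Contracting $X$.} Pick any $x\in X$ and let $G'=G-(X\setminus\{x\})$, a proper induced subgraph since $|X|\ge 2$. Define a weight $h'$ on $G'$ by $h'(v)=h(v)$ for $v\neq x$ and $h'(x)=\omega_h(X)$. This weight is positive and integral. Every maximal-weight clique of $G$ is either inside $G-X$ or of the form $K\cup Q$ with $K\subseteq N$ a clique and $Q$ a clique of $X$. Hence $\omega_{h'}(G')=\omega_h(G)$, and more generally, for $S\subseteq V(G')$, the weighted clique numbers correspond in the same way. Because $G'$ is perfectly weight divisible, it has an $h'$-perfect division $(A',B')$.

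\textbf{Lifting the division.} If $x\in B'$, set $A=A'$ and $B=B'\cup X$. Then $G[A]$ is perfect, and any clique of $G[B]$ meets $X$ in a clique of weight at most $\omega_h(X)=h'(x)$. So $\omega_h(B)\le\omega_{h'}(B')<\omega_h(G)$.

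If $x\in A'$, use that $G[X]$ is a proper induced subgraph of $G$, so it is perfectly weight divisible. Take an $h|_X$-perfect division $(A_X,B_X)$ of $G[X]$ and set $A=(A'\setminus\{x\})\cup A_X$ and $B=B'\cup B_X$.

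\begin{itemize}
\item $G[A]$ is obtained from the perfect graph $G[A']$ by substituting the perfect graph $G[A_X]$ for $x$. It is perfect by Theorem~\ref{thm-1}, and trivially so if $|A_X|\le 1$.
\item A clique $C$ of $G[B]$ either avoids $X$, so lies in $B'$ and has weight $<\omega_h(G)$, or meets $B_X$. In the latter case $C=K\cup Q$ with $K\subseteq N\cap B'$ and $Q\subseteq B_X$, so
\[
h(C)\le h(K)+\omega_h(B_X)\le h(K)+\omega_h(X)-1\le \omega_h(G)-1 .
\]
Here $K\cup\{x\}$ is a clique of $G'$ with $h'$-weight $h(K)+\omega_h(X)\le\omega_h(G)$.
\end{itemize}

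Thus $\omega_h(B)<\omega_h(G)$. If $B_X$ is empty, the bound holds trivially.

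The delicate step is this last one, when $x\in A'$. It is the reason weights are needed: the weight $\omega_h(X)$ placed on $x$ makes the bound on $h(K)$ exact, and perfect weight divisibility of $G[X]$ supplies the strict drop. Both cases give an $h$-perfect division of $G$, contradicting the choice of $h$.
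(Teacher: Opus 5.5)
Your proof is correct. You contract $X$ to a single vertex $x$ of weight $\omega_h(X)$, take an $h'$-perfect division of the smaller graph, and lift it according to whether $x$ lies on the perfect side, using Lov\'asz's substitution theorem and an $h|_X$-perfect division of $G[X]$; both cases check out, including the key bound $h(K)+\omega_h(B_X)\le \omega_h(G)-1$. The paper does not reprove this lemma but cites Chudnovsky--Sivaraman and Ho\`ang, and as far as I recall their argument, yours is essentially that standard weighted-contraction proof.
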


%Ho\'ang proposed the following conjecture in \cite{Hoang} and confirmed the perfect divisibility of (odd hole, banner)-free graphs. Chudnovsky and Sivaraman\cite{2-divisible2019} also proved the perfect divisibility of (odd-hole, bull)-free graphs and of ($P_5$, bull)-free graphs. 
%\begin{conjecture}
%    Odd hole-free graphs are perfectly divisible.
%\end{conjecture}

 Recently, Ho\'ang \cite{Hoang2025} conjectured that an MNPD graph contains no clique cutset, and verified this for $P_5$-free graphs and $4K_1$-free graphs.
\begin{conjecture}\label{coj-MNPD-nocliquecutset}
    A minimally nonperfectly divisible graph contains no clique cutset.
\end{conjecture}

In this paper, we investigate the relationship between the perfect divisibility of a graph and its perfect weight divisibility. Building on this, we establish the equivalence of several key characterizations of perfect divisibility and perfect weight divisibility, and further verify Conjecture~\ref{coj-MNPD-nocliquecutset} on graphs that forbidding some small substructure. 

As usual, we use $P_n$ to denote a path on $n$ vertices. Let $2P_3$ be the disjoint union of two $P_3$'s, $P_2\cup P_4$ the disjoint union of $P_2$ and $P_4$, and let {\em diamond} be the graph obtained from $K_4$ by removing an edge. Our main results are the following three theorems.

\begin{theorem}\label{thm-2}
    Let $\mathcal{G}$ be a hereditary graph class, $G\in {\cal G}$. Then the following statements are equivalent:
    \begin{enumerate}
        \item [$(1)$] $G$ is perfectly divisible if and only if $G$ is perfectly weight divisible;
        \item [$(2)$] Any MNPD graph in $\mathcal{G}$ contains no homogeneous set;
        \item [$(3)$] Any MNPD graph in $\mathcal{G}$ contains no homogeneous set that is a $2$-clique;
        \item [$(4)$] $G$ is perfectly divisible if and only if $G$ is $\mathcal{H}_2$-perfectly  divisible.
    \end{enumerate}
\end{theorem}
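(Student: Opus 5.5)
We read $(1)$ and $(4)$ as statements about every $G\in\mathcal G$. The plan is to prove the cycle $(1)\Rightarrow(4)\Rightarrow(3)\Rightarrow(1)$ together with $(1)\Rightarrow(2)\Rightarrow(3)$. The step $(3)\Rightarrow(1)$ is the one I do not have a complete argument for (see the last paragraph).

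\emph{The three easy implications.}
\begin{itemize}
\item $(1)\Rightarrow(4)$. Perfect weight divisibility implies $\mathcal H_2$-perfect divisibility, since each $h_{x,2}$ is a positive integral weight. $\mathcal H_2$-perfect divisibility in turn implies perfect divisibility. To see this, take an $h_{x,2}$-perfect division $(A,B)$ of an induced subgraph $H$ with $x\in V(H)$. Then $\omega(B)\le\omega_h(B)<\omega_h(H)$, and $\omega_h(H)\le\omega(H)+1$. If $\omega(B)=\omega(H)$, move one vertex into $A$; alternatively, use the all-ones weight directly when $x\notin V(H)$. So under $(1)$, all three notions coincide.
\item $(2)\Rightarrow(3)$ is trivial.
\item $(1)\Rightarrow(2)$. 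Let $G\in\mathcal G$ be MNPD. Every proper induced subgraph of $G$ lies in $\mathcal G$, because $\mathcal G$ is hereditary, and is perfectly divisible. By $(1)$ each such subgraph is therefore perfectly weight divisible. Since $G$ itself is not perfectly divisible, it is not perfectly weight divisible. Hence $G$ is MNPWD, and Lemma~\ref{homoset} says it has no homogeneous set.
\end{itemize}

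\emph{$(4)\Rightarrow(3)$.} Suppose $G\in\mathcal G$ is MNPD and $\{x,y\}$ is a homogeneous $2$-clique, so $x,y$ are adjacent true twins. Every proper induced subgraph of $G$ is perfectly divisible by minimality, so it suffices to find a perfect division of $G$ itself. The graph $G-y$ is in $\mathcal G$ and is perfectly divisible, so by $(4)$ it has an $h_{x,2}$-perfect division $(A,B)$.

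We first compare clique numbers. Every clique of $G$ containing $y$ but not $x$ can be enlarged by $x$. Hence maximum cliques of $G$ either avoid $y$ or contain both $x$ and $y$. It follows that $\omega_{h_{x,2}}(G-y)=\omega(G)$.

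Now place $y$ on the same side as $x$.
\begin{itemize}
\item If $x\in A$, then $G[A\cup\{y\}]$ is obtained from the perfect graph $G[A]$ by substituting $K_2$ for $x$, so it is perfect by Theorem~\ref{thm-1}. Also $\omega(B)\le\omega_h(B)<\omega(G)$.
\item If $x\in B$, then by the same twin argument $\omega(B\cup\{y\})=\omega_h(B)<\omega(G)$.
\end{itemize}
In either case $G$ is perfectly divisible, a contradiction.

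\emph{$(3)\Rightarrow(1)$: the main obstacle.} Fix $G\in\mathcal G$ that is perfectly divisible and a positive weight $h$. Let $G_h$ be the graph obtained by replacing each vertex $v$ by a clique of size $h(v)$, using repeated true-twin replication. An $h$-perfect division of an induced subgraph $H$ of $G$ corresponds to a perfect division of $H_h$ that keeps twin classes together, and the argument above shows that splitting a twin class can always be repaired. By Theorem~\ref{thm-1}, perfection is preserved under blow-ups.

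It therefore suffices to show that a single replication preserves perfect divisibility. Suppose $G'=G+y$, with $y$ a true twin of $x$, is not perfectly divisible. Take a minimal non-perfectly-divisible induced subgraph $F$ of $G'$. Then $F$ must contain both $x$ and $y$, since otherwise it is isomorphic to an induced subgraph of $G$. So $F$ is an MNPD graph with the homogeneous $2$-clique $\{x,y\}$, which contradicts $(3)$. Iterating over the replications gives $(1)$.

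The delicate point is that $F$ must lie in $\mathcal G$ for $(3)$ to apply. This needs $\mathcal G$ to be closed under true-twin replication, or else $(3)$ must be read for MNPD graphs arising from blow-ups of members of $\mathcal G$. I expect the intended reading to include this. If it does not, I would first try to run the whole argument as an induction on $\sum_v h(v)$ inside $\mathcal G$, lowering the weight of one vertex with $h(x)\ge2$ at a time. I do not yet see how to make that induction close without leaving $\mathcal G$, so this step is not settled by the plan above.
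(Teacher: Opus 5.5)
Your route is the paper's. You use Lemma~\ref{homoset} for $(1)\Rightarrow(2)$ and the same twin-placement argument for $(4)\Rightarrow(3)$. For $(3)\Rightarrow(1)$ you use the same blow-up of weights into cliques (the paper's Lemma~\ref{lem-1}, iterated), with a minimal non-perfectly-divisible subgraph forced to contain two true twins. The gap you flag there is genuine, and the paper's proof has it too, unacknowledged: it applies (3) to an MNPD induced subgraph $F$ of $G_x$ without knowing that $F\in\mathcal G$. Your fallback induction inside $\mathcal G$ cannot close this for all hereditary classes without settling an open question. The class of all perfectly divisible graphs is hereditary and contains no MNPD graph, so (2) and (3) hold vacuously. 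Yet (1) for this class says that every perfectly divisible graph is perfectly weight divisible, which is exactly the question the paper leaves open in its final Problem. The realistic fix is an extra hypothesis: $\mathcal G$ is closed under substituting a clique for a vertex (adding true twins). With that hypothesis your $(3)\Rightarrow(1)$ is complete.

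There is a second error, which you did not flag, in $(1)\Rightarrow(4)$. The repair ``if $\omega(B)=\omega(H)$, move one vertex into $A$'' is unjustified: moving vertices into $A$ can destroy perfection, and for $H=G$ there is no $x\notin V(H)$ to fall back on. Worse, the implication it is meant to give, $\mathcal H_2$-perfect divisibility $\Rightarrow$ perfect divisibility, is false.
\begin{itemize}
\item If $x$ lies in a maximum clique of $G$, then $(\{x\},V(G)\setminus\{x\})$ is always an $h_{x,2}$-perfect division of $G$, because $\omega_{h_{x,2}}(G-x)=\omega(G-x)\le\omega(G)<\omega_{h_{x,2}}(G)$.
\item A triangle-free graph with a perfect division is $3$-colourable: its perfect part is bipartite and the other part is independent.
\item A $3$-colourable triangle-free graph is perfectly weight divisible. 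Put two colour classes and the isolated vertices into $A$; each vertex left in $B$ has a neighbour, so its weight is below $\omega_h(H)$.
\end{itemize}
The Gr\"otzsch graph is triangle-free and $4$-vertex-critical. Its proper induced subgraphs are $3$-colourable, hence perfectly weight divisible, and every vertex lies on an edge. So it is $\mathcal H_2$-perfectly divisible but not perfectly divisible. Consequently, for the hereditary class of triangle-free graphs, (1) holds while (4) fails. The theorem as literally stated is false there, and the paper's ``$(1)\Rightarrow(4)$ is obvious'' hides the same mistake. Only the forward half of (4) follows from (1): every perfectly divisible $G\in\mathcal G$ is $\mathcal H_2$-perfectly divisible. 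That half is all your $(4)\Rightarrow(3)$ uses. With (4) read that way and the closure hypothesis added, your cycle of implications goes through.
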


\begin{theorem}\label{thm-3}
     Let $F$ be a $P_2\cup P_4$ or a diamond. Then, no minimally nonperfectly divisible $F$-free graph may contain homogeneous set.
\end{theorem}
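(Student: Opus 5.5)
By Theorem~\ref{thm-2}, applied to the hereditary class $\mathcal{G}$ of $F$-free graphs, it suffices to show that no MNPD $F$-free graph $G$ has a homogeneous set that is a $2$-clique. So suppose $G$ is MNPD and $F$-free, and $X=\{u,v\}$ is a clique homogeneous set. Then $u$ and $v$ are adjacent true twins: $N[u]=N[v]$.

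Since $G-v$ is a proper induced subgraph, it is perfectly divisible. Every proper induced subgraph of $G$ is perfectly divisible, so the only thing that can fail is that $G$ itself has no perfect division. The plan is to build a perfect division of $G$ from one of a suitable subgraph, and to use $F$-freeness only where the naive lift breaks.

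\textbf{Step 1: the naive lift.} Take a perfect division $(A,B)$ of $G-v$.
\begin{itemize}
\item If $u\in A$, put $v$ into $A$ as well. By Theorem~\ref{thm-1}, substituting the edge $uv$ for $u$ keeps $G[A\cup\{v\}]$ perfect. Also $\omega(B)<\omega(G-v)\le\omega(G)$, so $(A\cup\{v\},B)$ is a perfect division of $G$.
\item If $u\in B$, adding $v$ to $B$ raises the clique number of $B$ by one on cliques through $u$. This may give $\omega(B\cup\{v\})=\omega(G)$ when $\omega(G-v)=\omega(G)$. In that case every maximum clique of $G$ contains both $u$ and $v$, or $B$ contains an $(\omega-1)$-clique through $u$.
\end{itemize}

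\textbf{Step 2: handle the bad case.} First try $G'=G-\{u,v\}$ and a perfect division $(A',B')$ of it, placing $u,v$ into $A'$. If $A'\cup\{u,v\}$ is perfect and $\omega(B')<\omega(G)$, we are done. The danger is that $A'\cup\{u\}$ is imperfect, i.e.\ contains an odd hole or odd antihole through $u$.

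Next use $G-N[u]$ together with the clique $\{u,v\}$. Since $G$ is MNPD and twins are present, one can show that every maximum clique meets $N[u]$ in a controlled way. So the goal is to find a vertex set $S\ni u,v$ that is perfect and meets every maximum clique.

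\textbf{Step 3: the main obstacle, using $F$-freeness.}
\begin{itemize}
\item For the diamond: $\{u,v\}$ together with two nonadjacent common neighbours forms a diamond. Hence $N(u)\setminus\{v\}$ is a disjoint union of cliques, so $G[N[u]]$ is a disjoint union of cliques joined to $\{u,v\}$. This is perfect, and $N(u)$ is a clique cutset unless $G$ is small. I would then use the structure of diamond-free graphs to show that $N[u]$ can be put into the perfect part, or that a maximum clique is forced through $u$.
\item For $P_2\cup P_4$: the edge $uv$ is anticomplete to $M(u)$. Therefore $G[M(u)]$ is $P_4$-free, hence perfect (a cograph). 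Then $A=M(u)\cup\{u\}$ is perfect, since $u$ is isolated from $M(u)$, and $B=N(u)\cup\{v\}$. The set $B$ has clique number below $\omega(G)$ unless some maximum clique avoids $u$ yet lies in $N(u)\cup\{v\}$, which is impossible since such a clique extends by $u$. So $\omega(B)=\omega(G)$ only if a maximum clique lies in $N[v]=N[u]$ and contains $v$. Adjust by moving $v$ to $A$ instead: $M(u)\cup\{u,v\}$ is perfect by substitution (Theorem~\ref{thm-1}), and $B=N(u)\setminus\{v\}$ has $\omega(B)\le\omega(G)-2$, since any clique in it extends by $u,v$.
\end{itemize}
So the $P_2\cup P_4$ case closes directly. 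The diamond case, specifically bounding the clique number of the complement of the perfect part, is where I expect the real work.
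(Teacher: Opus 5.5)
Your reduction via Theorem~\ref{thm-2} to a homogeneous $2$-clique $X=\{u,v\}$ is the paper's. Your $P_2\cup P_4$ case is correct: the final division $(M(u)\cup\{u,v\},\,N(u)\setminus\{v\})$ is exactly the paper's $(X\cup M(X),N(X))$. The detour there is unnecessary, since every clique inside $N(u)$ extends by $u$, so $(M(u)\cup\{u\},N(u))$ already works.

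The genuine gap is the diamond case, which you never prove. After one observation you only state intentions (``use the structure of diamond-free graphs'', ``$N(u)$ is a clique cutset unless $G$ is small'') and explicitly defer the real work, so half of the theorem is left open. Steps~1--2 do not fill this. In particular, the claim that maximum cliques meet $N[u]$ ``in a controlled way'' is unsupported.

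The missing idea is already in your first sentence of Step~3, but you drew too weak a conclusion from it.
\begin{itemize}
\item Because $N[u]=N[v]$, \emph{every} vertex of $N(u)\setminus\{v\}$ is a common neighbour of $u$ and $v$. Two nonadjacent vertices there would therefore form an induced diamond with $u,v$.
\item Hence $N(u)\setminus\{v\}$ is a single clique, not merely a disjoint union of cliques. The weaker statement is the general fact about neighbourhoods in diamond-free graphs; here $v$ is universal in $G[N(u)]$, which collapses it to one clique.
\item So $N[u]$ is a clique, i.e.\ $u$ is simplicial.
\item Take a perfect division $(A,B)$ of $G-u$ and put $u$ into $A$. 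For any induced subgraph $H$ of $G[A\cup\{u\}]$ containing $u$, colour $H-u$ with $\omega(H-u)$ colours. Then give $u$ a colour not used on its $|N_H(u)|\le\omega(H)-1$ neighbours. Thus $G[A\cup\{u\}]$ is perfect.
\item Also $\omega(B)<\omega(G-u)\le\omega(G)$, so $(A\cup\{u\},B)$ is a perfect division of $G$. Since all proper induced subgraphs already have perfect divisions, this contradicts $G$ being MNPD. This is the ``no nonempty simplicial set'' part of Theorem~\ref{thm-basinsimplicial set}(1).
\end{itemize}

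The paper reaches the same diamond differently. It takes a perfect division $(A,B)$ of $G-x_2$ and handles $x_1\in A$ by Theorem~\ref{thm-1}. When $x_1\in B$, it uses the failure of both extensions to find $y\in N_A(x_2)$ and a maximum clique $K\ni x_2$ of $G[B\cup\{x_2\}]$ containing some $z\not\sim y$, so that $\{x_1,x_2,y,z\}$ is a diamond. Your route, completed as above, is shorter and avoids that case analysis.
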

%\begin{theorem}\label{thm-1}
 %   	A graph $G$ is perfectly weight divisible if and only if $G$ is perfectly divisible.
%\end{theorem}

\begin{theorem}\label{thm-4}
    Let $F$ be a $2P_3$ or a claw. Then, no minimally nonperfectly divisible $F$-free graph may  contain clique cutset. 
\end{theorem}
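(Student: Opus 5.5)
Suppose, for a contradiction, that $G$ is a minimally nonperfectly divisible $F$-free graph with a clique cutset $X$. The plan is to glue perfect divisions of the pieces together. Write $V(G)\setminus X=C_1\cup C_2$, where $C_1$ is the vertex set of one component of $G-X$ and $C_2$ is the rest, and put $G_i=G[C_i\cup X]$. Both $G_1$ and $G_2$ are proper induced subgraphs, so they are perfectly divisible. Since every proper induced subgraph of $G$ is perfectly divisible, it suffices to produce a perfect division $(A,B)$ of $G$ itself; this contradicts the assumption that $G$ is not perfectly divisible.

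A clique of $G$ lies entirely in $G_1$ or in $G_2$, because there are no edges between $C_1$ and $C_2$. Hence $\omega(G)=\max(\omega(G_1),\omega(G_2))$, and $\omega(G[B])<\omega(G)$ holds as soon as $\omega(G[B\cap V(G_i)])<\omega(G)$ for $i=1,2$. Also, if $G[A\cap V(G_i)]$ is perfect for $i=1,2$ and $A\cap X$ is a clique, then $G[A]$ is perfect. This holds because clique cutsets preserve perfection: colour each side optimally and permute colours to agree on the clique $A\cap X$. So the whole task reduces to choosing divisions $(A_i,B_i)$ of $G_i$ that agree on $X$, i.e. $A_1\cap X=A_2\cap X$.

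The main obstacle is this agreement on $X$. My first attempt is the following. Take a division $(A_1,B_1)$ of $G_1$ and set $S=A_1\cap X$. Then try to find a division of $G_2$ that puts all of $S$ in $A$ and all of $X\setminus S$ in $B$. To do this, consider $G_2'=G_2-(X\setminus S)$ and a division $(A',B')$ of it. Move $S\cap B'$ into $A'$ while keeping $A'$ perfect. This move is where the forbidden subgraph $F$ is used. For $F=$ claw, $G[A']$ together with part of a clique $S$ can be controlled: for example, a vertex $s\in S$ adjacent to a stable set of size $3$ in $C_2$ gives a claw. So the neighbourhoods of $S$-vertices in $C_2$ are covered by two cliques, and the structure near $X$ is restricted enough to choose $A'$ containing $S$. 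For $F=2P_3$, the presence of a $P_3$ on one side, say in $C_1$ (which must exist unless $G_1$ is a disjoint union of cliques locally), forces $C_2\cup$ its neighbourhood in $X$ to be $P_3$-free. Then the relevant part of $G_2$ is a cluster-like graph, which is perfect. In that case one simply takes $A\supseteq C_2$ together with the $S$ chosen from $G_1$, and checks that $\omega$ drops on $B$ because $B\cap V(G_2)\subseteq X\setminus S$ lies inside $B_1$.

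Concretely, in both cases I would organize the argument as follows. Choose $C_1$ so that $\omega(G_1)=\omega(G)$ when possible. Treat separately the side that is ``simple'' (forced to be $P_3$-free for $2P_3$, or locally co-bipartite around $X$ for the claw case) and the side that is ``complex''. Put all of the simple side into $A$, and then verify the clique inequality on $B$ directly. I expect the claw-free case, and in particular making $A\cap X$ consistent while ensuring that $B$ loses every maximum clique meeting $X$, to require the most detailed case analysis.
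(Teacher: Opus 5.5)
\textbf{Overall.} Your gluing reduction is sound, and the $2P_3$ case works (after one fix), but the claw-free case, which is most of the theorem, is not proved.

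\textbf{The $2P_3$ case.} Here your route is actually shorter than the paper's, which uses Lemma~\ref{lem-MNPD-x} and (\ref{eqa-P4}) to place a $P_3$ inside $A_2\setminus X$. Either the component $C_1$ is $P_3$-free, hence a clique, so $G_1$ is co-bipartite and perfect; or $C_2$ is $P_3$-free. In both cases some $G_i$ is perfect, and $(A_j\cup C_i,B_j)$ is a perfect division of $G$. However, your stated reason is false. $2P_3$-freeness does not make $G[C_2\cup N_X(C_2)]$ $P_3$-free: a vertex $x\in X$ adjacent to the $P_3$ in $C_1$ may lie on a path $x$--$c$--$c'$ with $c,c'\in C_2$. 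What you need is that $G[X\cup C_2]$ is perfect whenever $X$ is a clique and $G[C_2]$ is a disjoint union of cliques. This holds because an odd hole meets $X$ in at most two consecutive vertices, and an odd antihole $\overline{C}_{2m+1}$ ($m\ge 3$) meets $X$ in at most $m$ vertices. In both cases the vertices left in $C_2$ induce a $P_3$.

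\textbf{The claw-free case: a genuine gap.} Your ``first attempt'' has two unaddressed failure points.
\begin{enumerate}
\item Moving $S\cap B'$ into $A'$ may destroy perfection. In a counterexample, agreement on $X$ is impossible for every pair of divisions (Lemma~\ref{lem-MNPD-notempty}). So ``the structure near $X$ is restricted enough to choose $A'$ containing $S$'' is the content of the theorem, not a step toward it. Lemma~\ref{lem-MNPD-x} even shows the divisions can be chosen so that, for every vertex of $X$ where they disagree, neither repairing move is allowed.
\item Returning $X\setminus S$ to $B'$ can recreate a clique of size $\omega(G)$. The bound $\omega(B')<\omega(G_2')$ says nothing about $\omega(B'\cup(X\setminus S))$.
\end{enumerate}
Two further points. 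First, $\alpha(N_{C_2}(s))\le 2$ does not mean $N_{C_2}(s)$ is covered by two cliques (consider $C_5$). Second, claw-freeness plus a clique cutset does not by itself make either side perfect. Take two disjoint copies of $C_5$, join a new vertex $p_i$ to two adjacent vertices of the $i$-th copy, and join a vertex $x$ to $p_1,p_2$. The result is claw-free, $\{x\}$ is a clique cutset, and both sides contain $C_5$. So minimality must be exploited near $X$.

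\textbf{What the paper does instead.}
\begin{enumerate}
\item It takes $X$ to be a minimum clique cutset, so every $x\in X$ has neighbours in both $V_1$ and $V_2$, and hence $N_{V_i}(x)$ is a clique.
\item It fixes divisions as in Lemma~\ref{lem-MNPD-x}. This gives $x_0\in X\cap A_1\cap B_2$ with $\omega(B_1\cup\{x_0\})=\omega(G)$, and an odd hole or odd antihole $C$ through $x_0$ in $G[A_2\cup\{x_0\}]$.
\item In the hole case, a claw analysis shows $X$ is a simplicial set of $G_1$. This contradicts Lemma~\ref{lem-MNPD-x} applied to the second vertex of $V(C)\cap X$.
\item In the antihole case, Lemma~\ref{L-2} (in an MNPD claw-free graph no vertex is anticomplete to an odd antihole of length at least $7$) forces $V_1$ to be complete to the vertices of $C$ in $X\setminus\{x_0\}$. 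So $V_1$ is a clique and $G_1$ is co-bipartite, contradicting Lemma~\ref{lem-MNPD-notperfect}.
\end{enumerate}
None of these ingredients appears in your plan.
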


\iffalse
 As a consequence of Lemma~\ref{homoset} and Theorem~\ref{thm-1}, we obtain the following result:
\begin{theorem}
 	A minimally nonperfectly divisible graph has no homogeneous sets.   
\end{theorem}

 This fixes a gap in the applications of Lemma~\ref{homoset} to unweighted version \cite{DXX2022,SX2024,WX2024}. In fact, we derive a stronger conclusion:
\begin{theorem}
	Let $G$ and $H$ be perfectly divisible graphs, and $x\in V(G)$. Then the graph obtained from $G$ by substituting $H$ for $x$ is also perfectly divisible.
\end{theorem}
\fi

This paper is organized as follows. In Section \ref{sec1}, we prove Theorem \ref{thm-2} and Theorem \ref{thm-3}. In Section \ref{sec2}, we discuss clique cutsets in MNPD graphs and prove Theorem \ref{thm-4}. In Section \ref {sec3}, we discuss some extended results and outline potential directions for future research.

\section{Proof of Theorem \ref{thm-2}}\label{sec1}

We first prove a useful lemma for perfect weight divisibility. Let $G$ be a graph, $h$ be a positive integral weight function on $V(G)$ and $x\in V(G)$. Let $G_x$ be the graph obtained from $G$ by substituting a clique $X$ of size $h(x)$ for $x$, and $h_x$ be a weight function on $V(G_x)$ such that $h_x(v)=h(v)$ if $v\not\in X$ and $h_x(v)=1$ if $v\in X$.  For convenience, we suppose $x\in X$.

\begin{lemma}\label{lem-1}
If $G_x$ is $h_x$-perfectly divisible, then $G$ is $h$-perfectly divisible.
\end{lemma}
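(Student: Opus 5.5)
Fix an induced subgraph $H$ of $G$; we need an $h|_{V(H)}$-perfect division of $H$. If $x\notin V(H)$, then $H$ is an induced subgraph of $G_x$ with $h_x|_{V(H)}=h|_{V(H)}$, and the hypothesis gives the division directly. So assume $x\in V(H)$. Let $H_x$ be the subgraph of $G_x$ induced by $(V(H)\setminus\{x\})\cup X$; this is exactly $H$ with $x$ replaced by the clique $X$. The hypothesis supplies an $h_x$-perfect division $(A',B')$ of $H_x$. We will transfer it back to $H$.

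The first step is to compare clique weights. Every clique $K$ of $H$ containing $x$ corresponds to the clique $(K\setminus\{x\})\cup X$ of $H_x$, and the two have the same weight, since $|X|=h(x)$ and each vertex of $X$ has weight $1$. Cliques of $H$ avoiding $x$ are unchanged. Conversely, any clique of $H_x$ meets $X$ in some subset; enlarging that subset to all of $X$ keeps it a clique, because $X$ is homogeneous. Hence $\omega_{h_x}(H_x)=\omega_h(H)$.

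The second step is to define the partition of $V(H)$. Put $x$ into $A$ if $X\subseteq A'$; otherwise put $x$ into $B$. Set $A=(A'\setminus X)\cup(\{x\}$ if $X\subseteq A')$ and let $B$ be the rest.
\begin{itemize}
\item \emph{$A$ induces a perfect graph.} If $x\in A$, then $H[A]$ is isomorphic to an induced subgraph of $H_x[A']$ (pick any single vertex of $X$ for $x$), so it is perfect. If $x\notin A$, then $H[A]=H_x[A'\setminus X]$, which is also perfect.
\item \emph{Clique weights in $B$ drop.} Take any clique $K$ of $H[B]$. If $x\notin K$, then $K\subseteq B'$, so $h(K)\le\omega_{h_x}(B')<\omega_{h_x}(H_x)=\omega_h(H)$. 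If $x\in K$, this is the main obstacle, because $X$ may be split between $A'$ and $B'$, so the full lifted clique $(K\setminus\{x\})\cup X$ need not lie in $B'$.
\end{itemize}

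The fix for the split case is to choose the rule for $x$ differently: put $x$ into $B$ \emph{only if} $X\subseteq B'$. Otherwise some vertex of $X$ lies in $A'$, and we put $x$ into $A$, identifying it with one such vertex $x'\in X\cap A'$. Then $H[A]$ is isomorphic to $H_x[(A'\setminus X)\cup\{x'\}]$, an induced subgraph of $H_x[A']$, hence perfect. In the case $x\in B$, every clique $K\ni x$ of $H[B]$ lifts to $(K\setminus\{x\})\cup X\subseteq B'$ with equal weight. Therefore $h(K)<\omega_h(H)$, and $(A,B)$ is the required $h|_{V(H)}$-perfect division.
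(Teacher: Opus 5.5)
Your proof is correct and follows the paper's argument: your final rule (put $x$ in $B$ exactly when $X\subseteq B'$, and otherwise identify $x$ with some $x'\in X\cap A'$) is the paper's case split $X\cap A\neq\emptyset$ versus $X\subseteq B$, with the same collapse of $X$ to one vertex and the same identity $\omega_{h_x}(H_x)=\omega_h(H)$. In the write-up, delete the abandoned first rule and its bullet and state the final rule once; note that your explicit choice of $x'\in X\cap A'$ makes precise the twin relabelling that the paper's $A\setminus(X\setminus\{x\})$ leaves implicit.
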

\begin{proof}
     Suppose that $G_x$ is $h_x$-perfectly divisible. Let $F\subseteq V(G)$. Notice that $G[F]$ has an $h|_F$-perfect division if $x\not\in F$ since now $G[F]$ is also an induced subgraph of $G_x$ and $h_x|_F$=$h|_F$. So we suppose that $x\in F$. Let ($A, B$) be an $h_x|_{F\cup X}$-perfect division of $G_x[F\cup X]$ such that $G_x[A]$ is perfect and $\omega_{h_x}(B)<\omega_{h_x}(F\cup X)$. It is easy to see that $\omega_{h_x}(F\cup X)=\omega_{h}(F)$.

If $X\cap A\neq \emptyset$, then $G[A\setminus(X\setminus \{x\})]$ is perfect since it is isomorphic to $G_x[A\setminus(X\setminus \{x\})]$,  and
\begin{center}
	$\omega_h(B\setminus X)=\omega_{h_x}(B\setminus X)< \omega_{h_x}(F\cup X)=\omega_{h}(F)$,
\end{center}
which implies that ($A\setminus(X\setminus \{x\})$, $B\setminus X$)  is an $h|_F$ perfect division of $G[F]$.

Suppose that $X\subseteq B$. Then $G[A]$ is perfect since it is isomorphic to $G_x[A]$, and
\begin{center}
	$\omega_h(B\setminus(X\setminus \{x\}))=\omega_{h_x}(B)<\omega_{h_x}(F\cup X)=\omega_{h}(F)$,
\end{center}
which implies that ($A$, $B\setminus(X\setminus \{x\})$)  is an $h|_F$-perfect division of $F$. Therefore, $G$ is $h$-perfectly divisible. This proves Lemma~\ref{lem-1}.
\end{proof}

%It is certain that if a graph $G$ is perfectly weight divisible, then $G$ is perfectly divisible by considering the all-ones weight function. In this section, we show that the converse is also true. For two positive integral weight functions $h$ and $h'$ on $V(G)$, we say $h'<h$ if $h'(v)\le h(v)$ for any $v\in V(G)$,  and there exists a vertex  $u\in V(G)$ such that $h'(u)<h(u)$.
\medskip

We need to compare two weight functions in the following proof. For two positive integral weight functions $h$ and $h'$ on $V(G)$, we say $h'<h$ if $h'(v)\le h(v)$ for any $v\in V(G)$, and there exists a vertex  $u\in V(G)$ such that $h'(u)<h(u)$. We say a positive integral weight function $h$ is minimal for some property if for any $h'<h$, the property does not hold for $h'$.

\noindent\textbf{Proof of Theorem \ref{thm-2}:}
    We prove the equivalence of the four statements in two parts. Firstly, we establish the cyclic equivalence of Statements (1), (2), and (3). Secondly, we show the bidirectional equivalence between Statements (3) and (4).

    %\noindent
\textbf{Part 1: Cyclic equivalence of (1), (2), and (3)}
   
    $(1)\Rightarrow(2)$ follows immediately from Lemma \ref{homoset}. $(2)\Rightarrow(3)$ is obvious, since a homogeneous set that is a 2-clique is a special case of a general homogeneous set. Thus, it suffices to prove the reverse implication $(3)\Rightarrow(1)$ to complete the cyclic equivalence.

    Suppose that $(3)$ holds but $(1)$ fails for some graph in $\mathcal{G}$. We choose $G$ to be a minimal graph that is perfectly divisible but not perfectly weight divisible, and let $h$ be a minimal weight function such that $G$ admits no $h$-perfect division. In another words, $G$ is perfectly divisible but not $h$-perfectly divisible, every proper induced subgraph of $G$ is perfectly weight divisible, and for any positive integral weight function $h'<h$, $G$ is $h'$-perfectly divisible. Clearly, $G$ is MNPWD, and $h$ is not the all-ones weight function. For each vertex $x\in V(G)$ with $h(x)\neq 1$, if $G_x$ contains an MNPD graph, say $F$, then by Statement (3), we have that $|V(F)\cap X|\le 1$, which implies that $F$ is also an induced subgraph of $G$, contradicting the fact that $G$ is perfectly divisible. Thus, $G_x$ is perfectly divisible and $h_x$ cannot be the all-ones weight function. Repeating this process until some weight function $h_{x,x_1,\dots,x_k}$ is the all-ones function, we note that $G_{x,x_1,\dots,x_k}$ is still perfectly divisible (by the same reasoning as above). By Lemma~\ref{lem-1}, $G$ is $h$-perfectly divisible, a contradiction. This completes the proof of $(3)\Rightarrow(1)$, so Statements (1), (2), and (3) are equivalent.

    \textbf{Part 2: Equivalence of (3) and (4) }
    
    If Statement (3) holds, then $(1)$ holds by $(3)\Rightarrow(1)$. Moreover, $(1)\Rightarrow(4)$ is obvious, so Statement (4) holds.
    
    Assume that Statement (4) holds. Suppose for contradiction that $G$ is an MNPD graph that has a homogeneous set, say $X=\{x_1, x_2\}$, which is a clique. Let $h$ be a weight function on $G-x_2$ such that $h(x_1)=2$ and $h(v)=1$ if $v\ne x_1$. We can see that $h\in {\cal H}_2$ on $G-x_2$. Since $G$ is MNPD, $G-x_2$ is perfectly divisible. Thus, $G-x_2$ is ${\cal H}_2$-perfectly  divisible by Statement (4) and so is $h$-perfectly divisible. Let $(A, B)$ be an $h$-perfect division of $G-x_2$. Note that $\omega(G)=\omega_h(G-x_2)$. If $x_1\in A$, then $G[A\cup\{x_2\}]$ is still perfect by Theorem \ref{thm-1}, implying that $(A\cup \{x_2\},B)$ is a perfect division of $G$. Suppose that $x_1\in B$. Then,  $(A,B\cup \{x_2\})$ is a perfect division of $G$ since $\omega(B\cup\{x_2\})=\omega_h(B)<\omega_{h}(G-x_2)=\omega(G)$, a contradiction.
    This proves $(4)\Rightarrow(3)$.

    Combining Part 1 (1$\Leftrightarrow$2$\Leftrightarrow$3) and Part 2 (3$\Leftrightarrow$4), all four statements are equivalent. This completes the proof of Theorem \ref{thm-2}.\qed

\medskip

\noindent\textbf{Proof of Theorem \ref{thm-3}:} Suppose to the contrary that $G$ is an MNPD graph that contains a homogeneous set. By Theorem \ref{thm-2}, $G$ contains a homogeneous set that is a 2-clique, say $X=\{x_1,x_2\}$. 

If $G$ is $P_2\cup P_4$-free, then $G[M(x_1)]=G[M(X)]$ is $P_4$-free and thus perfect, implying that $(X\cup M(X),N(X))$ is a perfect division of $G$, a contradiction. 

Suppose that $G$ is diamond-free. Let $(A,B)$ be a perfect division of $G-x_2$. Note that $\omega(G)=\omega(G-x_2)$, otherwise $(\{x_2\},V(G)\setminus\{x_2\})$ is a perfect division of $G$. If $x_1\in A$, then $G[A\cup\{x_2\}]$ is still perfect by Theorem \ref{thm-1}, implying that $(A\cup \{x_2\},B)$ is a perfect division of $G$. Suppose that $x_1\in B$. Similarly, $G[A\cup\{x_2\}]$ is not perfect, and $\omega(B\cup\{x_2\})=\omega(G)$. Let $K$ be a maximum clique of $G[B\cup\{x_2\}]$ and $y\in N_A(x_2)$. Note that $x_2\in K$ and $(K\cap B)\setminus X\ne \emptyset$. Since $y$ is complete to $X$, there exists $z\in (K\cap B)\setminus X$ such that $y\not\sim z$. Then $\{x_1,x_2,y,z\}$ induces a diamond, a contradiction. This completes the proof of Theorem \ref{thm-3}.\qed

\section{Clique cutset in MNPD graphs}\label{sec2}

In this section, we mainly discuss the structure of MNPD graphs that may contain a clique cutset. Before that, we need to introduce some definitions which are significant for the discussion. 

%We use $E(X, Y)$ to denote the set of edges of $G$ with one end in $X$ and the other in $Y$.

Let $G$ be a graph, and $X, Y\subseteq V(G)$. Let $N[X]=N(X)\cup X$.  A set $X$ is called a \textit{basin} of $G$ if $\omega(N[X])<\omega(G)$, and is called a \textit{simplicial set} of $Y$ if $N_{X\cup Y}(x)$ is a clique for any $x\in X$. We simply say $X$ is a simplicial set of $G$ when $X$ is a simplicial set of $V(G)$. The proposition below follows directly from the definition.

\begin{proposition}\label{pro-simplicial set}
If $X$ is a simplicial set of $Y$ for some $Y\subset V(G)$ then  $X$  is the union of pairwisely anticomplete cliques, and if $X$ is a simplicial set of $G$ then $X$ is a simplicial set of $U$ for any $U\subseteq V(G)$.
\end{proposition}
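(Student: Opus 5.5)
The proposition is immediate from the definition of a simplicial set, so the plan is to unwind the definition and check the two assertions in turn.

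For the first assertion, suppose $X$ is a simplicial set of $Y$. Every $x\in X$ then has $N_{X\cup Y}(x)$ a clique. In particular $N_X(x)=N_{X\cup Y}(x)\cap X$ is a clique, so each $x$ is simplicial in the graph $G[X]$.

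Consider a connected component $C$ of $G[X]$. We show $C$ is a clique by contradiction. If it is not, then $C$ contains two nonadjacent vertices $u,w$. Take a shortest path $u=p_0,p_1,\dots,p_k=w$ in $C$, with $k\ge 2$. The interior vertex $p_1$ has two nonadjacent neighbors $p_0,p_2$ in $X$, since the path is shortest. This contradicts $N_X(p_1)$ being a clique. Hence every component of $G[X]$ is a clique. Distinct components are anticomplete by definition, so $X$ is a union of pairwise anticomplete cliques.

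For the second assertion, suppose $X$ is a simplicial set of $G$, i.e. of $V(G)$, and let $U\subseteq V(G)$. For any $x\in X$ we have $N_{X\cup U}(x)=N_G(x)\cap (X\cup U)\subseteq N_G(x)=N_{X\cup V(G)}(x)$. A subset of a clique is a clique, so $N_{X\cup U}(x)$ is a clique. Therefore $X$ is a simplicial set of $U$.

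Neither step presents a real obstacle. The only point needing any care is the shortest-path argument that a connected graph in which every vertex is simplicial is complete.
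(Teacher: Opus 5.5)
Your proof is correct and takes the same approach as the paper, which only remarks that the proposition follows directly from the definition. You supply the details the paper omits: restricting $N_{X\cup Y}(x)$ to $X$, the shortest-path argument that a connected graph in which every vertex is simplicial is complete, and the inclusion $N_{X\cup U}(x)\subseteq N_G(x)$.
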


\begin{proposition}\label{pro-simplicial set-p}
	Let $G$ be a graph and $X$ be a nonempty simplicial set of $G$. Then $G$ is perfect if and only if $G-X$ is perfect.
\end{proposition}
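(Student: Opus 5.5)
One direction is immediate: if $G$ is perfect, then $G-X$ is an induced subgraph of $G$ and hence perfect. So the content of Proposition~\ref{pro-simplicial set-p} lies in the converse, and I plan to prove it by showing $\chi(H)=\omega(H)$ for every induced subgraph $H$ of $G$ whenever $G-X$ is perfect.

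Fix an induced subgraph $H$ of $G$ and set $X'=X\cap V(H)$. By Proposition~\ref{pro-simplicial set}, $X$ is a simplicial set of $V(H)$, so for every $x\in X'$ the set $N_H(x)$ is a clique. Hence $N_H[x]=N_H(x)\cup\{x\}$ is a clique of $H$, and
\[
d_H(x)=|N_H[x]|-1\le \omega(H)-1 .
\]
The graph $H-X'$ is an induced subgraph of $G-X$, so it is perfect, and therefore $\chi(H-X')=\omega(H-X')\le\omega(H)$.

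Now take an optimal coloring of $H-X'$ with at most $\omega(H)$ colors and extend it greedily to the vertices of $X'$, taken in an arbitrary order. When a vertex $x\in X'$ is processed, its already-colored neighbors number at most $d_H(x)\le\omega(H)-1$. So at least one of the $\omega(H)$ colors is still free for $x$. This gives $\chi(H)\le\omega(H)$, hence $\chi(H)=\omega(H)$, and since $H$ was arbitrary, $G$ is perfect.

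No step here is a real obstacle. The one point to get right is that the degree bound must be taken inside $H$, not inside $G$. This is exactly what the second part of Proposition~\ref{pro-simplicial set} provides: it says $X$ remains a simplicial set of $V(H)$, so $N_H(x)$ is a clique for each $x\in X'$. Neighbors of $x$ lying inside $X'$ need no separate treatment, because the degree bound already counts them. The assumption that $X$ is nonempty is not needed for the argument.
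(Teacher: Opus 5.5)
Your proof is correct, but it takes a different route from the paper.

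The paper relies implicitly on the Strong Perfect Graph Theorem. For $Y\subseteq V(G)$ meeting both $X$ and $V(G)\setminus X$, it asserts that no vertex of $X$ lies on an odd hole or odd antihole of $G[Y]$. This holds because such a vertex has a clique neighborhood: on an odd hole its two hole-neighbors are nonadjacent, and in an odd antihole of length at least $7$ its neighborhood contains a non-edge. Hence any odd hole or odd antihole of $G[Y]$ would lie in the perfect graph $G[Y\setminus X]$, and perfection of $G[Y]$ follows.

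You instead bound $d_H(x)\le\omega(H)-1$ for $x\in X'$ and extend an optimal coloring of $H-X'$ greedily, which verifies $\chi(H)=\omega(H)$ directly from the definition. Your route is elementary and self-contained, avoids the SPGT, and treats every induced subgraph uniformly, with no case split on whether $Y\subseteq X$ or $Y\cap X=\emptyset$. The paper's route is shorter to write, but it rests on a deep theorem and leaves unstated the check that simplicial vertices avoid odd holes and odd antiholes.
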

\begin{proof}
    	If $G$ is perfect, then all of its induced subgraphs are perfect, and so is $G-X$. 
    
    Suppose that $G-X$ is perfect. Let $Y$ be a subset of $V(G)$. If $Y\subseteq X$ or $Y\cap X=\emptyset$, then $G[Y]$ is clearly perfect by Proposition~\ref{pro-simplicial set}. Suppose that $Y\cap X\ne\emptyset$ and $Y\setminus X\ne \emptyset$. By the definition of simplicial set, $X\cap Y$ is also a simplicial set $Y\setminus X$. Given that no vertex of $X$ is contained in any odd hole or odd antihole (if such structures exist) in $Y$, $G[Y]$ is perfect. Therefore, $G$ is perfect.
\end{proof}

\medskip
Let $X$ be a subset of $V(G)$, and $(X_1, X_2, \ldots, X_k)$ be a partition of $X$. We call $(X_1, X_2, \ldots, X_k)$ a \textit{simplicial decomposition} of $G[X]$ if $X_{i+1}$ is a simplicial set of $\bigcup\limits_{j=1}^{i}X_j$ for $i\in\{1,2,\ldots,k-1\}$. If $(X_1,X_2,\ldots,X_k)$ is a \textit{simplicial decomposition} of $X$ and $G[X_1]$ is perfect, then we call $(X_1,X_2,\ldots,X_k)$ a \textit{perfect simplicial decomposition} of $G[X]$.

\begin{proposition}\label{pro-simplicial set-pd}
	Let $G$ be a graph and $X$ be a nonempty simplicial set of $G$. Then $G$ is perfectly divisible if and only if $G-X$ is perfectly divisible.
\end{proposition}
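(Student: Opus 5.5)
The forward direction is immediate: perfect divisibility is hereditary, and $G-X$ is an induced subgraph of $G$. For the converse, I assume $G-X$ is perfectly divisible and take an arbitrary induced subgraph $H=G[Y]$. I need a perfect division of $H$. By Proposition~\ref{pro-simplicial set}, $X\cap Y$ is a simplicial set of $G[Y]$, and $G[Y]-(X\cap Y)=G[Y\setminus X]$ is an induced subgraph of $G-X$, hence perfectly divisible. It therefore suffices to prove the following: \emph{if $X$ is a nonempty simplicial set of $G$ and $G-X$ is perfectly divisible, then $G$ itself has a perfect division.} The case $X\cap Y=\emptyset$ is covered directly, and the case $Y\subseteq X$ is trivial because $G[Y]$ is then perfect.

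To build the division, let $(A',B')$ be a perfect division of $G-X$, so $G[A']$ is perfect and $\omega(B')<\omega(G-X)\le\omega(G)$. If $\omega(G-X)<\omega(G)$, I take the division $(X,\,V(G)\setminus X)$. Here $G[X]$ is a disjoint union of cliques by Proposition~\ref{pro-simplicial set}, so it is perfect, and $\omega(G-X)<\omega(G)$ by assumption. Otherwise $\omega(G-X)=\omega(G)$, and I take the division $(A'\cup X,\,B')$. Its second part satisfies $\omega(B')<\omega(G-X)=\omega(G)$. For the first part, $X$ is a simplicial set of $A'\cup X$ by Proposition~\ref{pro-simplicial set}, and $G[A']$ is perfect. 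Proposition~\ref{pro-simplicial set-p}, applied to the graph $G[A'\cup X]$, then shows that $G[A'\cup X]$ is perfect. In either case $G$ has a perfect division.

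The only step needing care is checking that the simplicial property passes to the sets I use, namely to $A'\cup X$ and to $Y$. Proposition~\ref{pro-simplicial set} gives this, since the neighbourhood of $x$ in any subset of $V(G)$ is a subset of a clique and hence a clique. No real obstacle remains. The same argument applied to $Y$ covers every induced subgraph, so $G$ is perfectly divisible.
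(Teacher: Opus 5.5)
Your proof is correct and is essentially the paper's argument: take a perfect division of $G[Y\setminus X]$, add $X\cap Y$ to the perfect side, and use that attaching a nonempty simplicial set preserves perfection. Your case split on whether $\omega(G-X)<\omega(G)$ is harmless but unnecessary, since $\omega(B')<\omega(G-X)\le\omega(G)$ already makes $(A'\cup X,B')$ a perfect division in both cases, which is how the paper handles it.
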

\begin{proof}
	If $G$ is perfectly divisible, then all of its induced subgraphs are perfectly divisible, and so is $G-X$.  
    
    Suppose that $G-X$ is perfectly divisible. Let $Y$ be a subset of $V(G)$. If $Y\subseteq X$ or $Y\cap X=\emptyset$, then $G[Y]$ is clearly perfectly divisible by Proposition~\ref{pro-simplicial set}. Suppose that $Y\cap X\ne\emptyset$ and $Y\setminus X\ne \emptyset$, and let $A$ and $B$ be a perfect division of $G[Y\setminus X]$ such that $G[A]$ is perfect and $\omega(B)<\omega(Y\setminus X)\le \omega(Y)$. By Proposition~\ref{pro-simplicial set-p}, we have that $G[A\cup(X\cap Y)]$ is perfect, which implies that  $(A\cup(X\cap Y), B)$ is a perfect division of $G[Y]$.  Therefore, $G$ is perfectly divisible.
\end{proof}

\begin{proposition}\label{pro-simplicial set-if}
	Let $G$ be a graph, $X\subseteq V(G)$, and $(X_1,X_2,\ldots,X_k)$ be a simplicial set decomposition of $X$. Then, $G[X]$ is perfect if and only if $G[X_1]$ is perfect.
\end{proposition}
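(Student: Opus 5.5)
The plan is induction on $k$, the number of parts of the simplicial decomposition, with Proposition~\ref{pro-simplicial set-p} doing the work at each step. For $k=1$ the statement is trivial, since then $X=X_1$. For $k\ge 2$ I would set $Y_i=\bigcup_{j=1}^{i}X_j$, so that $Y_1=X_1$ and $Y_k=X$. The aim is to show that, for each $i\in\{1,\ldots,k-1\}$, $G[Y_{i+1}]$ is perfect if and only if $G[Y_i]$ is perfect. Chaining these equivalences from $i=k-1$ down to $i=1$ then gives that $G[X]$ is perfect if and only if $G[X_1]$ is perfect.

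For the single step, I would work in the graph $H=G[Y_{i+1}]$ and apply Proposition~\ref{pro-simplicial set-p} with $H$ in place of $G$ and $X_{i+1}$ in place of $X$. The point to check is that $X_{i+1}$ is a simplicial set of $H$ in the sense of the definition, meaning that $N_{V(H)}(x)$ is a clique for every $x\in X_{i+1}$. The definition of a simplicial decomposition says that $X_{i+1}$ is a simplicial set of $Y_i$, i.e. $N_{X_{i+1}\cup Y_i}(x)$ is a clique for each $x\in X_{i+1}$. Since $X_{i+1}\cup Y_i=Y_{i+1}=V(H)$, this is exactly the required condition. I would also note that $H-X_{i+1}=G[Y_i]$.

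One small technicality is that Proposition~\ref{pro-simplicial set-p} assumes the simplicial set is nonempty. If some $X_{i+1}$ happens to be empty (a partition is normally taken to have nonempty parts, but empty parts do no harm), then $Y_{i+1}=Y_i$ and the step is trivial. Otherwise Proposition~\ref{pro-simplicial set-p} gives directly that $H$ is perfect if and only if $H-X_{i+1}=G[Y_i]$ is perfect.

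I do not expect a real obstacle here. The only care needed is in the step above: recognising that being ``a simplicial set of $Y_i$'' in the definition already means the neighbourhood is taken inside $X_{i+1}\cup Y_i$. This is precisely what lets Proposition~\ref{pro-simplicial set-p} be applied to the induced subgraph $G[Y_{i+1}]$ rather than to $G$.
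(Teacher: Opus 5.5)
Your proof is correct and follows the same approach as the paper. Both argue by induction on $k$ (your chain of equivalences over $Y_1\subseteq\cdots\subseteq Y_k$), with the step resting on the fact that $X_{i+1}$ is simplicial in $G[Y_{i+1}]$. Your explicit application of Proposition~\ref{pro-simplicial set-p} to $H=G[Y_{i+1}]$ is the fact that step actually needs; the paper cites only that $X_r$ is a union of pairwise anticomplete cliques, which on its own does not justify the conclusion.
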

\begin{proof}
If $G[X]$ is perfect, then $G[X_1]$ is certainly perfect. Suppose that $G[X_1]$ is perfect. We will prove by induction that $G[X]$ is perfect. We are done if $k=1$. So, we suppose that $k=r$ for some $r\ge 2$, and suppose that the statement holds while $k\le r-1$. Then, $G[X_1\cup\cdots\cup X_{r-1}]$ is perfect by induction. Since $X_r$ is a simplicial set of $X_1\cup\cdots\cup X_{r-1}$, we have that, by Proposition~\ref{pro-simplicial set}, $X_r$ is the union of some pairwisely anticomplete cliques, and hence $G[X]$ is perfect.
\end{proof}

\begin{theorem}\label{thm-basinsimplicial set}
    Let $G$ be an MNPD graph, and $v\in V(G)$. Then,
    \begin{itemize}
        \item [$(1)$] $G$ has no nonempty basin or simplicial set.
        \item [$(2)$] $N(v)$ has a subset $Y$ with  $\alpha(Y)\ge 2$ and $\omega(N(v)\setminus Y)=\omega(G)-1$.
        \item [$(3)$] There exists a maximum clique $K$ such that $v\in K$ and $N(v)\not\subseteq K$.
        \item [$(4)$] $|N(X)|\ge \omega(G)-\omega(X)$ for any nonempty subset $X$ of $V(G)$. 
        \item [$(5)$] $|N(v)|\ge \omega(G)+1$.
    \end{itemize}
\end{theorem}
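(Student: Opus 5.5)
The whole proof rests on one observation. Since $G$ is MNPD, every proper induced subgraph of $G$ is perfectly divisible, but $G$ itself has no perfect division; otherwise $G$ would be perfectly divisible. So each item will be proved by assuming it fails and building a perfect division of $G$ from a perfect division of a proper induced subgraph. I would prove $(1)$ first, then prove $(2)$ by an explicit construction. Items $(3)$ and $(5)$ follow from that construction, and $(4)$ follows from the basin part of $(1)$.

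\textbf{Basins in $(1)$.} Let $X$ be a nonempty basin. If $X=V(G)$, then $\omega(G)=\omega(N[X])<\omega(G)$, which is absurd, so $G-X$ is a proper induced subgraph. Take a perfect division $(A,B)$ of $G-X$, so $\omega(B)<\omega(G-X)\le\omega(G)$. I claim $(A,B\cup X)$ is a perfect division of $G$. A clique of $G[B\cup X]$ either avoids $X$, in which case it lies in $B$ and has size less than $\omega(G)$. Or it meets $X$, in which case it lies in $N[X]$ and has size at most $\omega(N[X])<\omega(G)$. This contradicts the minimality of $G$.

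\textbf{Simplicial sets in $(1)$.} Let $X$ be a nonempty simplicial set. If $X=V(G)$, then $G$ is a disjoint union of cliques by Proposition~\ref{pro-simplicial set}, hence perfect and perfectly divisible, which is impossible. Otherwise $G-X$ is perfectly divisible, so $G$ is perfectly divisible by Proposition~\ref{pro-simplicial set-pd}, again a contradiction.

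\textbf{Item $(2)$.} Take a perfect division $(A,B)$ of $G-v$. The set $\{v\}$ is not a basin, so $\omega(N(v))=\omega(G)-1$; also $\omega(G-v)=\omega(G)$, since otherwise $(\{v\},V(G)\setminus\{v\})$ would be a perfect division of $G$. If $\omega(B\cup\{v\})<\omega(G)$, then $(A,B\cup\{v\})$ is a perfect division of $G$. Hence $N_B(v)$ contains a clique of size $\omega(G)-1$. Set $Y=N_A(v)$, so that $N(v)\setminus Y=N_B(v)$ and $\omega(N(v)\setminus Y)=\omega(G)-1$. Suppose $Y$ were a clique (empty allowed). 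Then $\{v\}$ would be a simplicial set of $A\cup\{v\}$, and Proposition~\ref{pro-simplicial set-p} would make $G[A\cup\{v\}]$ perfect. In that case $(A\cup\{v\},B)$ is a perfect division of $G$, a contradiction. Therefore $\alpha(Y)\ge 2$. This step, choosing $Y$ as the $A$-part of $N(v)$ and using the simplicial-vertex proposition to force $\alpha(Y)\ge2$, is the only point needing an idea; the rest is bookkeeping.

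\textbf{Items $(3)$ and $(5)$.} For $(3)$, let $K=\{v\}\cup K'$, where $K'$ is a clique of size $\omega(G)-1$ in $N_B(v)$. Then $K$ is a maximum clique containing $v$. Since $Y\subseteq N(v)\setminus K$ and $Y\neq\emptyset$, we get $N(v)\not\subseteq K$. For $(5)$, $N(v)$ contains the disjoint sets $K'$ and $Y$, with $|K'|=\omega(G)-1$ and $|Y|\ge 2$, so $|N(v)|\ge\omega(G)+1$.

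\textbf{Item $(4)$.} Let $X$ be nonempty. If $X=V(G)$, then $N(X)=\emptyset$ and $\omega(X)=\omega(G)$, so the inequality holds trivially. Otherwise $X$ is not a basin by $(1)$, so $N[X]$ contains a clique $Q$ of size $\omega(G)$. At most $\omega(X)$ vertices of $Q$ lie in $X$, so $|N(X)|\ge|Q\cap N(X)|\ge\omega(G)-\omega(X)$.
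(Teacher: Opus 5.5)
Your proposal is correct and follows essentially the same route as the paper. The basin and simplicial-set parts of (1), as well as (4) and (5), are argued exactly as in the paper, and your direct proof of (2) with the explicit choice $Y=N_A(v)$ is the paper's contradiction argument written forwards. The only cosmetic difference is that the paper obtains (3) straight from (1), using that $\{v\}$ is neither a basin nor a simplicial set, whereas you read it off from the clique $K'\subseteq N_B(v)$ built in (2); your separate treatment of the degenerate case $X=V(G)$ is harmless.
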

\begin{proof} It follows from Proposition \ref{pro-simplicial set-pd} that $G$ has no nonempty simplicial sets. Let $X$ be a nonempty subset of $V(G)$. To prove (1),  it suffices to show that $X$ is not a basin of $G$. Since $G$ is MNPD, we may choose $(A,B)$ to be a perfect division of $G-X$. If $X$ is a basin of $G$, then $\omega(B\cup X)\le\max\{\omega(B), \omega(N[X])\}<\omega(G)$, and so $(A, B\cup X)$ is a perfect division of $G$. Therefore, $X$ is not a basin of $G$. 

Next we prove (2). Suppose to its contrary that there exists a vertex $v\in V(G)$ such that for any $Y\subseteq N(v)$ with $\alpha(Y)\ge 2$, $\omega(N(v)\setminus Y)<\omega(G)-1$. Since $G$ is MNPD, we may choose $(A, B)$ to be a perfect division of $G-v$. Then $v$ is not a simplicial set of $A$ as otherwise $(A\cup \{v\}, B)$ is a perfect division of $G$. So $\alpha(N(v)\cap A)\ge 2$. Let $W$ be a subset of $(N(v)\cap A$ with $\alpha(W)\ge 2$. Then,  $\omega(N(v)\cap B)<\omega(N(v)\setminus W)<\omega(G)-1$ by our assumption, which implies that $\omega(B\cup \{v\})<\omega(G)$. Therefore, $(A, B\cup \{v\})$ is a perfect division of $G$, a contradiction. %This proves the fourth statement. 

Since $G$ has no basin or simplicial set by (1), we have that $\omega(N[v])=\omega(G)$ and $N(v)$ is not a clique, and so (3) holds.

Let $X$ be a nonempty subset of $V(G)$. Since $X$ is not a basin of $G$ by (1), we have that $\omega(G)=\omega(N[X])\le \omega(X)+\omega(N(X))$. It follows that $|N(X)|\ge \omega(N(X))\ge \omega(G)-\omega(X)$. 

The last statement follows directly from (2).
\end{proof}

\medskip

It is worth to mention that Theorem~\ref{thm-basinsimplicial set}(5) cannot be improved further as evidenced by the Gr\"otzsch graph. 

Now, we come back to MNPD graphs with a clique cutset. Let $G$ be an MNPD graph and $X$ be a clique cutset of $G$. Then $V(G)\setminus X$ can be partitioned into two nonempty sets $V_1$ and $V_2$ such that $V_1$ is anticomplete to $V_2$. Let $G_i = G[X\cup V_i]$ for $i = 1, 2$. By the minimality of $G$, for $i = 1, 2$, $G_i$ admits a perfect division. By Theorem \ref{thm-basinsimplicial set}(1), $\omega(G_1)=\omega(G_2)=\omega(G)$ since $G$ has no nonempty basin. We can see that $G$ is $\binom{\omega(G)+1}{2}$-colorable since $G_1$ and $G_2$ are $\binom{\omega(G)+1}{2}$-colorable. Since all graphs with $\chi(G)\le \omega(G)+1$ are perfectly divisible, it follows that
 \begin{lemma}
     Each MNPD triangle-free graph contains no clique cutset.%of clique number less than 3
 \end{lemma}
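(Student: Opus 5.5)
We argue by contradiction and follow the discussion preceding the statement. Let $G$ be a triangle-free MNPD graph and suppose $X$ is a clique cutset of $G$. Since $G$ is triangle-free, $\omega(G)\le 2$. If $\omega(G)=1$, then $G$ is edgeless, hence perfect and trivially perfectly divisible, which is impossible. So we may assume $\omega(G)=2$, and then $|X|\le 2$. Partition $V(G)\setminus X$ into nonempty sets $V_1,V_2$ that are anticomplete to each other, and put $G_i=G[X\cup V_i]$ for $i=1,2$.

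First we bound $\chi(G_i)$. Each $G_i$ is a proper induced subgraph of $G$, so by minimality it has a perfect division $(A_i,B_i)$. Here $G_i[A_i]$ is perfect with clique number at most $2$, so $\chi(A_i)\le 2$. Also $\omega(B_i)<\omega(G_i)\le 2$, so $B_i$ is stable and $\chi(B_i)\le 1$. Hence $\chi(G_i)\le 3=\binom{\omega(G)+1}{2}$.

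Next we glue the two colorings along $X$. Since $X$ is a clique, its vertices receive distinct colors in any proper coloring of $G_1$ and of $G_2$. We can therefore permute the colors of the $3$-coloring of $G_2$ so that it agrees with that of $G_1$ on $X$. Because $V_1$ is anticomplete to $V_2$, the union of the two colorings is a proper $3$-coloring of $G$. Thus $\chi(G)\le 3=\omega(G)+1$.

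Finally, we use the fact quoted just before the statement, that every graph with $\chi\le\omega+1$ is perfectly divisible. For completeness we prove it. Let $H$ be any induced subgraph of a graph $G$ with $\chi(G)\le\omega(G)+1$. We may assume $H$ is not perfect. Then $\omega(H)\ge 2$, since an edgeless graph is perfect. We also have $\chi(H)\le\chi(G)\le 3$. If $\omega(H)\ge 3$, take $A$ to be one color class of a $3$-coloring of $H$ and $B=V(H)\setminus A$. Then $A$ is stable, hence perfect, and $\omega(B)\le\chi(B)\le 2<\omega(H)$. If $\omega(H)=2$, take $A$ to be the union of two color classes and $B$ the third. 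Then $A$ is bipartite, hence perfect, and $B$ is stable, so $\omega(B)\le 1<2$. In both cases $(A,B)$ is a perfect division of $H$, so $G$ is perfectly divisible. This contradicts $G$ being MNPD.

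The only step needing care is the gluing of colorings across the clique cutset, which works precisely because the vertices of $X$ are forced to receive distinct colors on both sides; the remaining steps are routine.
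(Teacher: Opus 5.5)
Your proof is correct and follows the paper's route: bound $\chi(G_i)$ by $\binom{\omega(G)+1}{2}=3$ using the perfect divisions of $G_i$, glue the colorings across the clique cutset $X$, and conclude from the perfect divisibility of graphs with $\chi\le\omega+1$; you make each of these steps explicit. One small caveat: the condition $\chi\le\omega+1$ is not hereditary (e.g.\ the disjoint union of $K_5$ and the Gr\"otzsch graph satisfies it without being perfectly divisible), so the quoted ``fact'' only holds in its hereditary reading, and your argument in fact proves what you need, namely that every graph with $\chi\le 3$ is perfectly divisible.
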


  Let $G$ be an MNPD graph with a clique cutset $X$, and $G_1$ and $G_2$ be defined as above. Then, we have the following Lemmas~\ref{lem-MNPD-notempty}, \ref{lem-MNPD-notperfect} and \ref{lem-MNPD-x}.

 \begin{lemma}\label{lem-MNPD-notempty}
 $X\setminus ((A_1\cap A_2)\cup (B_1\cap B_2))\ne\emptyset$ for any perfect division $(A_i, B_i)$ of $G_i$, $i\in\{1,2\}$.
 \end{lemma}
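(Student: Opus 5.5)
We argue by contradiction. Suppose that for some perfect divisions $(A_1,B_1)$ of $G_1$ and $(A_2,B_2)$ of $G_2$ we have $X\subseteq (A_1\cap A_2)\cup(B_1\cap B_2)$. This says that the two divisions agree on the cutset: every vertex of $X$ lies either in both $A$-parts or in both $B$-parts. We then glue the two divisions together by setting $A=A_1\cup A_2$ and $B=B_1\cup B_2$. These sets partition $V(G)$, because $V(G)=V_1\cup V_2\cup X$ and each vertex of $X$ is assigned consistently. We will show that $(A,B)$ is a perfect division of $G$, which contradicts the assumption that $G$ is MNPD.

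First we bound the clique number of $B$. Since $V_1$ is anticomplete to $V_2$, every clique of $G$ lies inside $X\cup V_1$ or inside $X\cup V_2$. Hence every clique of $G[B]$ is contained in $B_1$ or in $B_2$. This gives $\omega(B)\le\max\{\omega(B_1),\omega(B_2)\}<\max\{\omega(G_1),\omega(G_2)\}=\omega(G)$. (We only need $\omega(G_i)\le\omega(G)$ here; the paper's observation that $\omega(G_1)=\omega(G_2)=\omega(G)$ is not required.)

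Next we show that $G[A]$ is perfect, and this is the main step. Set $X'=X\cap A=X\cap A_1=X\cap A_2$. Then $G[A]$ is obtained from the perfect graphs $G[A_1]$ and $G[A_2]$ by identifying them along the common clique $X'$. Moreover, $A_1\setminus X'\subseteq V_1$ is anticomplete to $A_2\setminus X'\subseteq V_2$, so $X'$ is a clique cutset of $G[A]$ whenever both sides are nonempty. We would invoke the standard fact that gluing two perfect graphs along a clique yields a perfect graph. To keep the argument self-contained, we would verify it directly. For any induced subgraph $H$ of $G[A]$, color $H\cap A_1$ and $H\cap A_2$ optimally, each with $\omega(H)$ colors. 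Since the vertices of $X'\cap V(H)$ form a clique, they receive distinct colors in both colorings, so we can permute the colors of the second coloring to match the first on $X'\cap V(H)$. The combined coloring is proper because no edges join the two sides outside $X'$. Thus $\chi(H)=\omega(H)$. Alternatively, we could argue via the Strong Perfect Graph Theorem: an odd hole or odd antihole has no clique cutset, so it would have to lie inside $A_1$ or $A_2$, which is impossible.

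Combining the two steps, $(A,B)$ is a perfect division of $G$. Every proper induced subgraph of $G$ is perfectly divisible because $G$ is MNPD, so $G$ would be perfectly divisible, a contradiction. The only delicate point is the color-matching on $X'$ in the perfectness step; the rest is bookkeeping about the partition and about where cliques can lie.
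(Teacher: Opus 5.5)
Your proof is correct and follows the same route as the paper: assume the two divisions agree on $X$, glue them into $(A_1\cup A_2, B_1\cup B_2)$, and check that this is a perfect division of $G$, contradicting minimality. The only difference is that you explicitly justify two steps the paper asserts without detail. These are that $G[A_1\cup A_2]$ is perfect, because it is two perfect graphs glued along the common clique $X\cap A_1=X\cap A_2$ (your color-matching argument), and that every clique of $G[B_1\cup B_2]$ lies in $B_1$ or in $B_2$.
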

\begin{proof}
     If $X\setminus ((A_1\cap A_2)\cup (B_1\cap B_2))=\emptyset$, then $(A_1\cup A_2, B_1\cup B_2)$ is a partition of $V(G)$. Additionally, $A_1\cap A_2$ and $B_1\cap B_2$ are cliques since $X$ is a clique cutset, which implies that $G[A_1\cup A_2]$ is perfect and $\omega(B_1\cup B_2)< \omega(G)$. Thus, $(A_1\cup A_2, B_1\cup B_2)$ forms a perfect division of $G$, a contradiction.     
\end{proof}

\begin{lemma}\label{lem-MNPD-notperfect}
  For $i\in\{1,2\}$, $G_i$ is not perfect.
\end{lemma}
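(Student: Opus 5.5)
We argue by contradiction: suppose, by symmetry, that $G_1$ is perfect. The plan is to take a perfect division $(A_2,B_2)$ of $G_2$, which exists by the minimality of $G$, and extend it to all of $G$ by adding $V_1$ to the perfect side. Concretely, we set
\[
A = A_2\cup V_1, \qquad B = B_2 .
\]
Since $B_2\subseteq V(G_2)$ and $\omega(G_2)=\omega(G)$ (noted above via Theorem~\ref{thm-basinsimplicial set}(1)), we get $\omega(B)=\omega(B_2)<\omega(G_2)=\omega(G)$. So the only thing left to show is that $G[A]$ is perfect; once that holds, $(A,B)$ is a perfect division of $G$, contradicting that $G$ is MNPD.

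The key point is that $G[A]$ has a clique cutset structure. Put $X'=X\cap A_2$, which is a clique. Since $V_1$ is anticomplete to $V_2$, the set $X'$ separates $V_1$ from $A_2\setminus X$ in $G[A]$. Hence $G[A]$ is obtained by gluing two graphs along the clique $X'$:
\begin{itemize}
\item $G[V_1\cup X']$, which is an induced subgraph of the perfect graph $G_1$ and so is perfect;
\item $G[A_2]$, which is perfect by the choice of $(A_2,B_2)$.
\end{itemize}
It is classical that gluing two perfect graphs along a clique gives a perfect graph. Indeed, every induced subgraph is again such a gluing, and one can colour the two sides with $\omega$ colours each and permute colours so that they agree on the common clique. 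Alternatively, an odd hole or odd antihole cannot have a clique cutset, since it has no cutset that is a clique. Either way $G[A]$ is perfect. If $X'=\emptyset$, then $G[A]$ is just a disjoint union of perfect graphs, which is also fine.

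The main point to verify carefully is this clique-gluing fact. It is standard but not stated earlier in the paper, so we will include the short colouring argument. We also need the equality $\omega(G_2)=\omega(G)$, which is exactly the no-basin statement already used above: if $\omega(G_2)<\omega(G)$, then $V_2$ would be a basin, because $N[V_2]\subseteq V_2\cup X$. With both in hand, the contradiction shows that neither $G_1$ nor $G_2$ is perfect.
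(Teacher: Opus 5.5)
Your proof is correct and is essentially the paper's argument. The paper sets $B_1=B_2\cap X$, $A_1=V(G_1)\setminus B_1$ and appeals to Lemma~\ref{lem-MNPD-notempty}, whose proof produces exactly your division $(V_1\cup A_2,\,B_2)$ of $G$; you skip that detour and make explicit the clique-gluing perfection fact that Lemma~\ref{lem-MNPD-notempty} uses tacitly.
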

\begin{proof}
    By symmetry, suppose for contradiction that $G_1$ is perfect. Let $(A_2,B_2)$ be a perfect division of $G_2$, and let $B_1=B_2\cap X$ and $A_1=V(G_1)\setminus B_1$. It is certain that $G[A_1]$ is perfect since $G_1$ is. By Theorem \ref{thm-basinsimplicial set}(1), $\omega(G_1)=\omega(G_2)=\omega(G)$, so we have $\omega(B_1)=\omega(B_2\cap X)<\omega(G_2)=\omega(G_1)$.  So $(A_1,B_1)$ is a perfect division of $G_1$ such that $X\setminus ((A_1\cap A_2)\cup (B_1\cap B_2))=\emptyset$, a contradiction to Lemma \ref{lem-MNPD-notempty}.
\end{proof}
\begin{lemma}\label{lem-MNPD-x}
    There exists a perfect division $(A_i, B_i)$ of $G_i$ for each $i\in\{1,2\}$ such that 
    \begin{itemize}
        \item for any $x\in X\cap A_1\cap B_2$, $\omega(B_1\cup \{x\})=\omega(G_1)$ and $G[A_2\cup \{x\}]$ is not perfect, and
        \item for any $x\in X\cap A_2\cap B_1$, $\omega(B_2\cup \{x\})=\omega(G_2)$ and $G[A_1\cup \{x\}]$ is not perfect.
    \end{itemize}
\end{lemma}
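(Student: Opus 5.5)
We choose the pair of perfect divisions by an extremal argument and then show that any violation of either bullet could be repaired, contradicting the choice. Among all pairs $(A_1,B_1)$, $(A_2,B_2)$ with $(A_i,B_i)$ a perfect division of $G_i$, we take one maximizing $|X\cap A_1\cap A_2|+|X\cap B_1\cap B_2|$, i.e.\ minimizing the number of vertices of $X$ on which the two divisions disagree. Recall from Theorem~\ref{thm-basinsimplicial set}(1) that $\omega(G_1)=\omega(G_2)=\omega(G)$. By symmetry it suffices to verify the first bullet. So fix $x\in X\cap A_1\cap B_2$.

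Suppose first that $\omega(B_1\cup\{x\})<\omega(G_1)$. Then $(A_1\setminus\{x\},B_1\cup\{x\})$ is again a perfect division of $G_1$, because $G[A_1\setminus\{x\}]$ is an induced subgraph of the perfect graph $G[A_1]$. Replacing $(A_1,B_1)$ by this division leaves $(A_2,B_2)$ unchanged. Now $x$ lies in $B_1\cap B_2$, and the membership of every other vertex of $X$ is unchanged, so the quantity we maximized strictly increases. This contradicts the choice, and hence $\omega(B_1\cup\{x\})=\omega(G_1)$.

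Suppose next that $G[A_2\cup\{x\}]$ is perfect. Then $(A_2\cup\{x\},B_2\setminus\{x\})$ is a perfect division of $G_2$, since $\omega(B_2\setminus\{x\})\le\omega(B_2)<\omega(G_2)$. Replacing $(A_2,B_2)$ by this division puts $x$ into $A_1\cap A_2$ and changes nothing else on $X$. The quantity again strictly increases, which is a contradiction. Hence $G[A_2\cup\{x\}]$ is not perfect.

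The second bullet follows by the same two replacements with the indices $1$ and $2$ exchanged, applied to the same extremal pair. Both bullets therefore hold simultaneously for the chosen pair, because each violation, for any $x$ in either set, yields a strict improvement of the single objective. The main care needed is to check that each modification affects only the status of $x$ within $X$. This holds because we only move $x$ itself, so no other vertex of $X$ changes membership, and thus no other vertex of $X$ switches between agreeing and disagreeing. Lemma~\ref{lem-MNPD-notempty} is not needed for this argument; it only guarantees that the disagreement set of the extremal pair is nonempty.
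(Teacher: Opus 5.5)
Your proof is correct and is essentially the paper's argument. The paper assumes every pair of divisions has a violating vertex and repeatedly applies the same two moves (moving $x$ into $B_1$, or moving $x$ into $A_2$). Each move shrinks the disagreement set $X\setminus((A_1\cap A_2)\cup(B_1\cap B_2))$ until it is empty, which contradicts Lemma~\ref{lem-MNPD-notempty}. Your extremal choice packages that iteration into a single step.

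Your remark that Lemma~\ref{lem-MNPD-notempty} is not needed is also right: a pair with empty disagreement set satisfies both bullets vacuously.
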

\begin{proof}
Suppose for contradiction that for any perfect division $(A_i, B_i)$ of $G_i$ where $i\in\{1,2\}$, there exists an $x\in X\cap A_1\cap B_2$, such that $\omega(B_1\cup \{x\})<\omega(G_i)$ or $G[A_2\cup \{x\}]$ is perfect, or  there exists an $y\in X\cap A_2\cap B_1$, such that $\omega(B_2\cup \{y\})<\omega(G_2)$ or $G[A_1\cup \{y\}]$ is perfect. 

Let $(A_i^0, B_i^0)$ be a perfect division of $G_i$ for $i=1,2$. By assumption and by symmetry, suppose $x^0\in X\cap A_1^0\cap B_2^0$ such that $\omega(B_1^0\cup \{x^0\})<\omega(G_1)$ or $G[A_2^0\cup \{x^0\}]$ is perfect. Then $(A_1^0\setminus \{x^0\}, B_1^0\cup \{x^0\})$ is a new perfect division of $G_1$ or $(A_2^0\cup \{x^0\}, B_2^0\setminus \{x^0\})$ is a new perfect division of $G_2$. In either case, let $(A_i^1, B_i^1)$ denote the resulting perfect division of $G_i$ for $i=1,2$. Note that $x^0\in (A_1^1\cap A_2^1)\cup (B_1^1\cap B_2^1)$, so $|X\setminus ((A_1^1\cap A_2^1)\cup (B_1^1\cap B_2^1))|<|X\setminus ((A_1^0\cap A_2^0)\cup (B_1^0\cap B_2^0))|$. Repeating this process eventually yields a perfect division $(A_i^j, B_i^j)$ of $G_i$, for some $j$, such that $X\setminus ((A_1^j\cap A_2^j)\cup (B_1^j\cap B_2^j))=\emptyset$, a contradiction to Lemma~\ref{lem-MNPD-notempty}.
\end{proof}

\medskip

Let  $(A_i, B_i)$ denote the perfect division of $G_i$ that satisfies Lemma \ref{lem-MNPD-x}, with $x\in A_1\cap B_2\cap X$. We will show that 
    \begin{equation}\label{eqa-P4}
        \mbox{there exists $z\in( A_2\cap X)\cup \{x\}$ such that $G[(A_2\setminus X)\cup \{z\}]$ contains a $P_4$ starting at $z$.}
    \end{equation}

Since $G[A_2\cup \{x\}]$ is not perfect by Lemma \ref{lem-MNPD-x}, let $C$ be an odd hole or an odd antihole of $G[A_2\cup \{x\}]$. If $C$ is an odd hole, let $C=xv_1v_2...v_nx$ with $n\ge 4$ even, then since $X$ is a clique, we have that $X\cap V(C)\subseteq \{x,v_1\}$ or $\{x,v_n\}$, and so $xv_1v_2v_3$ or $xv_nv_{n-1}v_{n-2}$ is the desired path with $z=x$. Suppose that $C$ is an odd antihole, and 
write $V(C)=\{u_0,u_1,u_2,...,u_k\}$ where $k\ge 6$ even, such that $u_0=x$ and the complement of $C$ is just the cycle $u_0u_1u_2\ldots u_ku_0$. There must exist a vertex $u_j\in V(C)\cap X$ such that $|\{u_{j+2},u_{j-2}\}\cap X|\le 1$; otherwise $X\cap V(C)\ge \frac{k+1}{2}$, a contradiction to $\omega(C)=\frac{k-1}{2}$. Since $\{u_{j+1},u_{j-1}\}\cap X=\emptyset$, either $u_ju_{j+2}u_{j-1}u_{j+1}$ or $u_ju_{j-2}u_{j+1}u_{j-1}$ is the desired path with $z=u_j$. This proves (\ref{eqa-P4}). 

%\medskip

%The following lemmas come from Lemma~2.5 of \cite{WX2024} and Theorem~1.1 of \cite{XZ2025--}, which will be used subsequently.

\begin{lemma}\label{L-2}
If $G$ is an MNPD claw-free graph, then $G[M(v)]$ contains no odd antihole except $C_5$ for any $v\in V(G)$.
\end{lemma}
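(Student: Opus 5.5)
We argue by contradiction: suppose $G$ is an MNPD claw-free graph, $v\in V(G)$, and $G[M(v)]$ contains an odd antihole $H$ on $2k+1\ge 7$ vertices, so $k\ge 3$. Write $V(H)=\{h_0,\dots,h_{2k}\}$ (indices modulo $2k+1$) so that the complement of $H$ is the cycle $h_0h_1\cdots h_{2k}h_0$. Then $h_i\not\sim h_{i\pm1}$, and $h_i$ is adjacent to every other vertex of $H$. The plan is to derive a claw using only claw-freeness and the connectivity of $G$. The MNPD hypothesis is used only for connectivity: if $G$ were disconnected, a perfect division of each component (each is a proper induced subgraph, hence perfectly divisible) would combine into one for $G$, because $\omega$ of a disjoint union is the maximum over its components.

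\textbf{Step 1: find an attachment vertex.} Since $G$ is connected, take a shortest path from $v$ to $V(H)$. Its last vertex before reaching $H$ is some $w\notin V(H)$ with $S:=N(w)\cap V(H)\neq\emptyset$, and $w$ has a neighbour $p$ (the previous vertex on the path, or $v$ itself) that is anticomplete to $V(H)$. Such a $p$ exists because all path vertices before $w$ have no neighbours in $H$, and $v\in M(v)$'s complement side is anticomplete to $M(v)\supseteq V(H)$.

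\textbf{Step 2: $S$ contains no two consecutive vertices.} If $h_i,h_{i+1}\in S$, then $w$ together with $p,h_i,h_{i+1}$ is a claw, since these three leaves are pairwise nonadjacent. Hence $S$ is independent in the complement cycle, so $|S|\le k$.

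\textbf{Step 3 (the main point): a claw centred in $S$.} Take $h_j\in S$. If some two consecutive vertices $h_a,h_{a+1}$ with $a\in\{j+2,\dots,j+2k-2\}$ (so $a,a+1\notin\{j-1,j,j+1\}$) both lie outside $S$, then $h_j$ is adjacent to $w$, $h_a$ and $h_{a+1}$, while $w\not\sim h_a,h_{a+1}$ and $h_a\not\sim h_{a+1}$. This is a claw centred at $h_j$.

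So assume no such pair exists for any $h_j\in S$. Fix $h_0\in S$. The stretch $h_2,\dots,h_{2k-1}$ has $2k-2\ge 4$ vertices, $S$ meets every consecutive pair in it, and $S$ contains no two consecutive vertices. Therefore $S$ alternates along this stretch. Moreover, $h_1,h_{2k}\notin S$ by Step 2, since $h_0\in S$. Thus $S$ contains either $h_2,h_4,\dots,h_{2k-2}$ or $h_3,h_5,\dots,h_{2k-1}$. By the reflection $h_i\mapsto h_{-i}$ we may assume the former. Now recentre at $h_2\in S$. The pair $h_{2k-1},h_{2k}$ lies in the allowed range $h_4,\dots,h_{2k}$ for centre $h_2$, and because $k\ge3$ it avoids $h_1,h_2,h_3$. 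Both vertices are outside $S$: $h_{2k}\notin S$ was shown above, and $h_{2k-1}\notin S$ because $S$ alternates and $h_{2k-2}\in S$. So $h_2$, $w$, $h_{2k-1}$, $h_{2k}$ induce a claw, a contradiction.

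The delicate step is this parity and recentring bookkeeping in Step 3. It is also exactly where $k\ge3$ is needed. For $C_5$ the stretch is too short to force the configuration, which matches the exception in the statement.
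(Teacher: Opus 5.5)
Your proof is correct and follows essentially the paper's argument: your $w$ and $p$ play the roles of the paper's $u$ and its neighbour $a\in M(C)$, you use the same two kinds of claws, and your final claw on $\{h_2,h_{2k-1},h_{2k},w\}$ is the paper's claw on $\{v_2,v_{k-1},v_k,u\}$. The two claw types are those centred at $u$ with leaves $a$ and two consecutive vertices of the complementary cycle, and those centred at a vertex of $N_C(u)$ with leaves $u$ and two consecutive non-neighbours of $u$. The only difference is bookkeeping: the paper splits on whether $u\sim v_2$, while your alternation argument plus the reflection $h_i\mapsto h_{-i}$ handles both cases at once. This is tidier than the paper's subcase $u\not\sim v_2$, whose cited claws on $\{v_0,v_5,v_6,u\}$ and $\{a,u,v_5,v_6\}$ do not quite work as written: $v_0\not\sim v_6$ when $|V(C)|=7$, and $u\sim v_6$ is never established.
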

\begin{proof}
    Suppose to its contrary that $G$ is an MNPD claw-free graph, $v\in V(G)$ and $C$ is an odd antihole of $G[M(v)]$ such that $|V(C)|\ge 7$. Let $V(C)=\{v_0,v_1,v_2,...,v_k\}$ for some even $k\ge 6$, such that the complement of $C$ is a cycle $v_0v_1v_2\ldots v_kv_0$.
    Let $u\in N(M(C))$, $a$ be a neighbor of $u$ in $M(C)$, and suppose, without loss of generality, that $u\sim v_0$. To forbid a claw on $\{v_i,v_j,u,a\}$, $u\not\sim v_1$ and $u\not\sim v_k$. 
    If $u\not\sim v_2$, then $u\sim v_3$ to avoid a claw on $\{v_0,v_2,v_3,u\}$ and $u\not\sim v_4$ to avoid a claw on $\{a,v_3,v_4,u\}$, and consequently, $u\sim v_5$ to avoid a claw on $\{v_0,v_5,v_6,u\}$, which forces a claw on $\{a,u,v_5,v_6\}$, a contradiction. If $u\sim v_2$, then $u\not\sim v_3$ to avoid a claw on $\{a,u,v_2,v_3\}$, and $u\sim v_4$ to avoid a claw on $\{u,v_0,v_3,v_4\}$, and consequently, $u\sim v_i$ for $i\in\{0,2,...,k-2\}$ and $u\not\sim v_j$ for $j\in\{1,3,...,k-1\}$. But now, $\{v_2,v_{k-1},v_{k},u\}$ induces a claw, a contradiction again. This completes the proof of Lemma \ref{L-2}.
\end{proof}

%\begin{lemma}\label{miniclaw}{\em \cite{XZ2025--}}
%	Every minimally nonperfectly divisible fork-free graph is claw-free.
%\end{lemma}

\medskip

\noindent\textbf{Proof of Theorem \ref{thm-4}:} 
    Suppose to its contrary that $G$ is an MNPD graph that contains a clique cutset, and let $X$ be a minimum clique cutset. Then $V(G)\setminus X$ can be partitioned into two nonempty sets $V_1$, $V_2$ such that $V_1$ is anticomplete to $V_2$. Let $G_i = G[X\cup V_i]$ for $i = 1, 2$. By Lemmas~\ref{lem-MNPD-notempty} and \ref{lem-MNPD-x}, we may choose $(A_i, B_i)$ as a perfect division of $G_i$ for $i\in\{1,2\}$ and assume by symmetry that 
\begin{equation}\label{eqa-thm-2-cliquecut}
\mbox{$x_0\in X\cap A_1\cap B_2$ satisfies $\omega(B_1\cup \{x_0\})=\omega(G_1)$ and $G[A_2\cup \{x_0\}]$ is not perfect}.
\end{equation}
    
     By (\ref{eqa-P4}), $G[A_2\setminus X]$ induces a $P_3$, say $w_1w_2w_3$. If $G$ is $2P_3$-free, then to avoid a $2P_3$ on  $\{w_1,w_2,w_3,x_1,x_2,x_3\}$ for some $x_1,x_2,x_3\in V_1$, we have that each component of $G[V_1]$ is a clique, which implies that $G_1$ is perfect, a contradiction to Lemma \ref{lem-MNPD-notperfect}. 
     
  \medskip 
  
  Next, we suppose that $G$ is claw-free.  Let $x\in X$. To avoid a claw on $\{x,y_1,y_1',y_2\}$ or $\{x,y_1,y_2',y_2\}$ for some $y_1,y_1'\in V_1$ and $y_2,y_2'\in V_2$, we have that
   \begin{equation}\label{eqa-MNPD-forksimplicial set}
       \mbox{for each $i\in\{1,2\}$, $\{x\}$ is a simplicial set of $V_i$.}
   \end{equation}

   Since $G[A_2]$ is perfect but $G[A_2\cup\{x_0\}]$ is not by (\ref{eqa-thm-2-cliquecut}), we may suppose that $G[A_2\cup\{x_0\}]$ induces an odd hole or an odd antihole, denoted by $C$ with $x_0\in V(C)$. 

\begin{claim}\label{cla-1}
    $C$ is not an odd hole.
\end{claim}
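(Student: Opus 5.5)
The plan is to exploit claw-freeness at the vertices of $X$ that lie on $C$, together with (\ref{eqa-MNPD-forksimplicial set}) and the minimality of the clique cutset $X$. Suppose for contradiction that $C$ is an odd hole, and write $C=x_0v_1v_2\cdots v_nx_0$ with $n\ge 4$ even. Since $C\subseteq A_2\cup\{x_0\}\subseteq X\cup V_2$ and $X$ is a clique, the argument used for (\ref{eqa-P4}) gives $X\cap V(C)\subseteq\{x_0,v_1\}$ or $X\cap V(C)\subseteq\{x_0,v_n\}$. In particular, all of $v_2,\dots,v_{n-1}$ lie in $V_2$, and at least one of $v_1,v_n$ lies in $V_2$.

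\textbf{Step 1.} I first rule out $v_1,v_n\in V_2$. By (\ref{eqa-MNPD-forksimplicial set}), $\{x_0\}$ is a simplicial set of $V_2$, so $N(x_0)\cap V_2$ is a clique. But $v_1\not\sim v_n$ because $n\ge4$. Hence, by symmetry, we may assume $v_1\in X\cap A_2$ and $v_n\in V_2$.

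\textbf{Step 2.} Next I use that $X$ is a \emph{minimum} clique cutset. Every vertex of $X$ has a neighbor in $V_1$ and a neighbor in $V_2$, since otherwise deleting it from $X$ leaves a smaller clique cutset. Take $y\in N(x_0)\cap V_1$. Then $y\not\sim v_n$ because $V_1$ is anticomplete to $V_2$. Also $v_1\not\sim v_n$. So to avoid a claw on $\{x_0,y,v_1,v_n\}$ we need $y\sim v_1$, and this holds for every $y\in N(x_0)\cap V_1$. Symmetrically, at $v_1$, whose neighbor $v_2$ lies in $V_2$ and is nonadjacent to $x_0$, the same argument shows that every neighbor of $v_1$ in $V_1$ is adjacent to $x_0$. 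Hence $N(x_0)\cap V_1=N(v_1)\cap V_1$.

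\textbf{Step 3.} This is the main obstacle: turning Step 2 into a contradiction. The claw conditions alone seem too weak, so I would bring in the division data from (\ref{eqa-thm-2-cliquecut}): $x_0\in A_1\cap B_2$, $v_1\in A_2$, and $\omega(B_1\cup\{x_0\})=\omega(G_1)$. The first thing I would try is to show that $\{x_0,v_1\}$ behaves like a homogeneous $2$-clique relative to $V_1\cup V_2$. By Step 2 their neighborhoods in $V_1$ agree. In $V_2$, claw-freeness at $x_0$ with the pair $v_n,y$ (and at $v_1$ with $v_2,y$) forces $N(x_0)\cap V_2$ and $N(v_1)\cap V_2$ to be cliques, which constrains how they differ. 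Then I would try to move $x_0$ into $A_2$ without destroying perfection. For this I would use that $x_0$ and $v_1$ are adjacent twins on $V_1$, that their $V_2$-neighborhoods are cliques (so by Proposition~\ref{pro-simplicial set-p} they cannot lie on an odd hole inside $V_2$ except through each other), and Theorem~\ref{thm-1} for the twin part. If this yields a perfect division of $G_2$ with $x_0\in A_2$, it contradicts the choice in Lemma~\ref{lem-MNPD-x}.

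If this fails, the fallback is the direct route. I would show that $x_0$ has a neighbor $u\in V_2\setminus\{v_n\}$ with $u\not\sim v_1$, and then obtain a claw on $\{x_0,u,v_1,y\}$ or on $\{x_0,u,v_n,y\}$ from the cliqueness of $N(x_0)\cap V_2$ and the minimality of $X$.
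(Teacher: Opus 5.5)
Your Steps 1 and 2 are correct and coincide with the opening of the paper's proof. You get $V(C)\cap X=\{x_0,v_1\}$ with $v_2,\dots,v_n\in V_2$, and claws centred at $x_0$ and at $v_1$ give $N(x_0)\cap V_1=N(v_1)\cap V_1$. The gap is Step 3: it contains no argument, and none of the routes you sketch works.
\begin{itemize}
\item The pair $\{x_0,v_1\}$ is not homogeneous relative to $V_2$: $v_n$ sees $x_0$ but not $v_1$, and $v_2$ sees $v_1$ but not $x_0$. So there is no twin structure on the $G_2$ side.
\item Putting $x_0$ into the perfect part of $G_2$ is hopeless, because $A_2\cup\{x_0\}$ contains the odd hole $C$.
\item Even a different perfect division of $G_2$ with $x_0$ in its perfect part would not contradict Lemma~\ref{lem-MNPD-x}. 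That lemma only asserts that \emph{some} good pair of divisions exists.
\item Your fallback claws cannot exist. Every $u\in N(x_0)\cap V_2\setminus\{v_n\}$ is adjacent to $v_n$, since that set is a clique. Every $y\in N(x_0)\cap V_1$ is adjacent to $v_1$ by your own Step 2. Any further claw centred at $x_0$ must use a vertex $x^*\in X\setminus\{x_0,v_1\}$.
\end{itemize}

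The missing idea is to work on the $G_1$ side and apply the \emph{second} bullet of Lemma~\ref{lem-MNPD-x} to $v_1\in X\cap A_2$. The paper's route is as follows.
\begin{itemize}
\item For every $x^*\in X\setminus\{x_0,v_1\}$, it shows $N(x^*)\cap V_1=N(x_0)\cap V_1$. This uses neighbours of $x^*$ in $V_1$ and $V_2$, which exist by minimality of $X$, plus several more claws.
\item Hence $X\cup N_{V_1}(X)$ is a clique and every vertex of $X$ is simplicial in $G_1$.
\item A maximum clique of $G_1$ inside $B_1\cup\{x_0\}$ must then be this clique. So $X\setminus\{x_0\}\subseteq B_1$ and $v_1\in X\cap A_2\cap B_1$.
\item Lemma~\ref{lem-MNPD-x} then says $G[A_1\cup\{v_1\}]$ is imperfect, which contradicts $v_1$ being simplicial there.
\end{itemize}

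Your Step 2 actually suffices without the $x^*$-analysis.
\begin{itemize}
\item Since $x_0$ and $v_1$ are both complete to $X$, Step 2 makes them adjacent twins in $G_1$.
\item A maximum clique $K\subseteq B_1\cup\{x_0\}$ of $G_1$ contains $x_0$, because $\omega(B_1)<\omega(G_1)$. So $K\cup\{v_1\}$ is a clique, and maximality forces $v_1\in K$, i.e.\ $v_1\in B_1$.
\item Lemma~\ref{lem-MNPD-x} then requires $G[A_1\cup\{v_1\}]$ to be imperfect. But this graph arises from the perfect graph $G[A_1]\ni x_0$ by substituting the clique $\{x_0,v_1\}$ for $x_0$, so it is perfect by Theorem~\ref{thm-1}.
\end{itemize}
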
   
\begin{proof}
   Suppose that $C$ is an odd hole, write $C=x_0v_1v_2\ldots v_nx_0$ with $n\ge 4$ even. Since $X$ is a clique, we may suppose $V(C)\cap X=\{x_0,v_1\}$ by (\ref{eqa-MNPD-forksimplicial set}). To avoid a claw on $\{x_0,v_1,v_n,y_1\}$ for some $y_1\in V_1$, $N_{V_1}(x_0)$ must be complete to $\{x_0,v_1\}$. By symmetry, $N_{V_1}(v_1)$ is also complete to $\{x_0,v_1\}$. Thus, $N_{V_1}[\{x_0,v_1\}]$ is a clique by (\ref{eqa-MNPD-forksimplicial set}). Let $x^*\in X\setminus\{x_0,v_1\}$. We next prove that 
   \begin{equation}\label{eqa-MNPD-2}
       \mbox{$N_{V_1}[x^*]= N_{V_1}[\{x_0,v_1\}]$.}
   \end{equation}
   Firstly, we prove that $N_{V_1}[x^*]\subseteq N_{V_1}[\{x_0,v_1\}]$. Suppose for contradiction that there exists a $y_1\in N_{V_1}[x^*]\setminus N_{V_1}[\{x_0,v_1\}]$. Since $X$ is a minimum clique cutset, we may choose a  $y_2\in N_{V_2}(x^*)$. To avoid a claw on $\{x_0,y_1,y_2,x^*\}$ or $\{v_1,y_1,y_2,x^*\}$, $y_2$ must be complete to $\{x_0,v_1\}$ since $y_1\not\sim x_0$ and $y_1\not\sim v_1$ by assumption. Let $z\in N_{V_1}(\{x_0,v_1\})$. To avoid a claw on $\{x_0,y_2,v_n,z\}$ or $\{v_1,y_2,v_2,z\}$, we have $y_2\sim v_n$ and $y_2\sim v_2$. Then to avoid a claw on $\{y_2,x^*,v_2,v_n\}$, we have that either $x^*\sim v_n$ or $x^*\sim v_2$, which forces a claw on $\{y_1,x^*,v_n,v_1\}$ or $\{y_1,x^*,v_2,x_0\}$, a contradiction. Therefore, $N_{V_1}[x^*]\subseteq N_{V_1}[\{x_0,v_1\}]$.
   
   Next, let $z'\in N_{V_1}[\{x_0,v_1\}]\setminus N_{V_1}[x^*]$. It is certain that $z'$ is complete to $N_{V_1}[\{x_0,v_1\}]\setminus\{z'\}$ since $N_{V_1}[\{x_0,v_1\}]$ is a clique. To avoid a claw on $\{x_0,z',v_n,x^*\}$ or $\{v_1,z',v_2,x^*\}$, $x^*\sim v_n$ and $x^*\sim v_2$, which forces a claw on $\{x^*,z,v_2,v_n\}$  for some $z\in N_{V_1}(\{x_0,v_1\})$, a contradiction. This proves (\ref{eqa-MNPD-2}).

   Since $X$ is a clique, we have that for any $x^*\in X$, $N_{V_1\cup X}[x^*]= N_{V_1\cup X}[\{x,v_1\}]$, which is a clique by (\ref{eqa-MNPD-forksimplicial set}). So, 
   \begin{equation}\label{eqa-MNPD-3}
       \mbox{$X$ is a simplicial set of $V_1\cup X$, and $N_{V_1}[X]$ is a clique.}
   \end{equation}
   
   Let $K\subseteq B_1\cup\{x_0\}$ be a maximum clique of $G_1$. Then $K=N_{V_1}[X]$ by (\ref{eqa-MNPD-3}). By our assumption, $\omega(B_1\cup\{x_0\})=\omega(G_1)=\omega(G)$, so $X\setminus \{x_0\}\subseteq B_1$. Now $X\cap A_1=\{x_0\}$, $X\cap B_1=X\setminus \{x_0\}$, which implies that $v_1\in A_2\cap B_1\cap X$. By Lemma \ref{lem-MNPD-x}, $G[A_1\cup \{v_1\}]$ is not perfect, a contraction to that $\{v_1\}$ is a simplicial set of $V_1\cup X$ by (\ref{eqa-MNPD-3}). This completes the proof of Claim~\ref{cla-1}.
\end{proof}

  By Claim~\ref{cla-1}, $C$ is an odd antihole of length at least 7. Let $V(C)=\{u_0,u_1,u_2,...,u_k\}$ for some even $k\ge 6$, such that $u_0=x_0$ and the complement of $C$ is the cycle $u_0u_1u_2\ldots u_ku_0$.

  By (\ref{eqa-MNPD-forksimplicial set}), $N_{A_2}(x_0)$ can be partitioned into two cliques $X_1$ and $X_2$, where $X_1\subseteq X\cap A_2$ and $X_2\subseteq A_2\setminus X$. %By symmetry, we may assume $\{u_2,u_4,...,u_{k-2}\}\subseteq X_1$ and $\{u_3,u_5,...,u_{k-1}\}\subseteq X_2$. 
  It is clear that $\{u_1,u_k\}\subseteq A_2\setminus X$. We will prove that
  \begin{equation}\label{eqa-MNPD-4}
      \mbox{$V_1$ is complete to $\{u_2,u_4,...,u_{k-2}\}$.}
  \end{equation}
  Let $y_1\in V_1$.  By Lemma \ref{L-2}, every $v\in V(G)\setminus V(C)$ has a neighbor in $C$. Let $z\in N_C(y_1)$. Then, $z=u_j$ for some $j\in\{2,4,...,k-2\}$. To avoid a claw on $\{y_1, u_j,u_{j+2},u_{j+3}\}$ or $\{y_1, u_j,u_{j-2},u_{j-3}\}$,  we have that $y_1$ must be complete to $\{u_{j-2}, u_{j+2}\}$. Repeat this argument, we conclude that $y_1$ is complete to $\{u_2,u_4,...,u_{k-2}\}$. This proves (\ref{eqa-MNPD-4}).

  By (\ref{eqa-MNPD-forksimplicial set}), (\ref{eqa-MNPD-4}), and the fact that $X$ is a clique, $V_1\cup X$ can be partitioned into two cliques. Thus, $G_1$ is perfect, which contradicts Lemma \ref{lem-MNPD-notperfect}.
 
    This completes the proof of Theorem \ref{thm-2}.\qed%\end{proof}

%%%%%%%%%%%%%%%%%%%%%%%%%%%%%%%%%%%%%%%%%%%%%%%%%%
%%%%%%%%%%%%%%%%%%%%%%%%%%%%%%%%%%%%%%%%%%%%%%%%
%%%%%%%%%%%%%%%%%%%%%%%%%%%%%%%%%%%%%%%%20260208  

%\section{Clique cutset on MN2D graphs} 
\section{Remark}\label{sec3}
This section is devoted to extending some proof ideas to minimally non-2-divisible graphs (defined later) and discussing unresolved issues pertaining to perfect divisibility. It also contains several conjectures derived from the existing literature.

A 2-{\em division} of $G$ is a partition of $V(G)$ into $A$ and $B$ such that $\omega(A) < \omega(G)$ and $\omega(B) < \omega(G)$.
A graph is 2-{\em divisible} if each of its induced subgraphs with at least one edge has a 2-division \cite{Hoang}. By a simple induction on $\omega(G)$ we have that every 2-divisible graph $G$ satisfies $\chi(G)\le 2^{\omega(G)-1}$. 

Let $G$ be a non-2-divisible graph. If each of its proper induced subgraphs is 2-divisible, then we call $G$ a \textit{minimally non-2-divisible graph} (MN2D for short). It is not difficult to apply the proof ideas of Section \ref{sec2} to MN2D graphs and get the following theorem.

\begin{theorem}\label{thm-basinsimplicial set-2}
    Let $G$ be an MN$2$D graph, and $v\in V(G)$. Then,
    \begin{itemize}
        \item [$(1)$] $G$ has no nonempty basin.% or simplicial set.
        %\item [$(2)$] $N(v)$ has a subset $Y$ with  $\alpha(Y)\ge 2$ and $\omega(N(v)\setminus Y)=\omega(G)-1$.
        \item [$(2)$] There exists a maximum clique $K$ such that $v\in K$ and $N(v)\not\subseteq K$.
        \item [$(3)$] $|N(X)|\ge \omega(G)-\omega(X)$ for any nonempty subset $X$ of $V(G)$. 
        \item [$(4)$] $|N(v)|\ge 2\omega(G)-2$.
    \end{itemize}
\end{theorem}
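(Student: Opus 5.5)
The plan is to mimic the proof of Theorem~\ref{thm-basinsimplicial set}, with "perfect division" replaced by "$2$-division". Two facts drive everything. First, $G$ has at least one edge, since otherwise it would be vacuously $2$-divisible, so $\omega(G)\ge 2$. Second, every proper induced subgraph of $G$ is $2$-divisible while $G$ itself has no $2$-division. Each item will be proved by building a $2$-division of $G$ from a $2$-division of a smaller subgraph, which contradicts the second fact.

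For (1), let $X$ be a nonempty set with $\omega(N[X])<\omega(G)$. If $X=V(G)$ then $N[X]=V(G)$, which is impossible, so $G-X$ is a proper induced subgraph. If $G-X$ is edgeless, take $A=V(G)\setminus X$ and $B=X$. Then $\omega(A)\le 1<\omega(G)$ and $\omega(B)\le\omega(N[X])<\omega(G)$, so $(A,B)$ is a $2$-division of $G$. Otherwise let $(A,B)$ be a $2$-division of $G-X$, and pass to $(A,B\cup X)$. Every clique of $B\cup X$ that meets $X$ lies in $N[X]$, so
\[\omega(B\cup X)\le\max\{\omega(B),\omega(N[X])\}<\omega(G),\]
and $\omega(A)<\omega(G-X)\le\omega(G)$. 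Either way we get a $2$-division of $G$, a contradiction.

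For (3), the same computation as in Theorem~\ref{thm-basinsimplicial set}(4) works once (1) is known: $\omega(G)=\omega(N[X])\le\omega(X)+\omega(N(X))\le\omega(X)+|N(X)|$.

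For (2) and (4), fix $v$ and first dispose of the case $\omega(G-v)<\omega(G)$. In that case $(V(G)\setminus\{v\},\{v\})$ is a $2$-division of $G$, because $\omega(G-v)<\omega(G)$ and $\omega(\{v\})=1<2\le\omega(G)$. This also covers the case where $G-v$ is edgeless. So $\omega(G-v)=\omega(G)$, and we may take a $2$-division $(A,B)$ of $G-v$, in which $\omega(A),\omega(B)\le\omega(G)-1$.

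Now $(A\cup\{v\},B)$ is not a $2$-division of $G$. This forces $\omega(A\cup\{v\})=\omega(G)$, so $N(v)\cap A$ contains a clique of size $\omega(G)-1$. Symmetrically, $(A,B\cup\{v\})$ is not a $2$-division, so $N(v)\cap B$ contains a clique of size $\omega(G)-1$. These two cliques are disjoint, which gives $|N(v)|\ge 2\omega(G)-2$ and proves (4).

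For (2), item (1) applied to $\{v\}$ gives $\omega(N[v])=\omega(G)$, so $v$ lies in a maximum clique $K$. It remains to show $N(v)\not\subseteq K$, that is, that $N(v)$ is not a clique. Any two vertices, one from each of the two $(\omega(G)-1)$-cliques found above, lie in the disjoint sets $A$ and $B$. Hence $|N(v)|\ge 2\omega(G)-2>\omega(G)-1$, using $\omega(G)\ge 2$. If $N(v)$ were a clique, then $N[v]$ would be a clique of size greater than $\omega(G)$, which is impossible. So $N(v)\not\subseteq K$.

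The main obstacle is step (4). The key observation there is that blocking both placements of $v$ forces two \emph{disjoint} large cliques in $N(v)$, one inside each part of the division. The only care needed is the degenerate case $\omega(G-v)<\omega(G)$, including an edgeless $G-v$, which is handled by the trivial division above.
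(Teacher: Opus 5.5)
Your proof is correct and follows essentially the adaptation the paper intends. The paper gives no separate proof, saying only that the ideas of Section~\ref{sec2} carry over, and your arguments are the $2$-division analogues of the proof of the MNPD version there: you add $X$ to the $B$-side for (1), and for (4) you block both placements of $v$ in a $2$-division of $G-v$ to get two disjoint $(\omega(G)-1)$-cliques in $N(v)$, from which (2) follows. You also correctly handle the degenerate cases ($G-X$ or $G-v$ edgeless, and $\omega(G-v)<\omega(G)$), which the paper leaves implicit and which arise because $2$-divisibility only constrains induced subgraphs with an edge.
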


 Let $G$ be an MN2D graph and $X$ be a clique cutset of $G$, and let $V(G)\setminus X$ be partitioned into two nonempty sets $V_1$, $V_2$ such that $V_1$ is anticomplete to $V_2$. Let $G_i = G[X\cup V_i]$ for $i = 1, 2$. By the minimality of $G$, for $i = 1, 2$, $G_i$ admits a 2-division. By Theorem \ref{thm-basinsimplicial set-2}(1), $\omega(G_1)=\omega(G_2)=\omega(G)$ since $G$ has no nonempty basin.
 
 %We can see that $G$ is $2^{\omega(G)-1}$-colorable since $G_1$ and $G_2$ are $2^{\omega(G)-1}$-colorable. It follows that
 %\begin{lemma}
 %    Minimally non-2-divisible graph of clique number less than 3 contains no clique cutset.
 %\end{lemma}
 \begin{lemma}\label{lem-MN2D-notempty}
 For $i\in\{1,2\}$ and any $2$-division $(A_i, B_i)$ of $G_i$, $X\setminus ((A_1\cap A_2)\cup (B_1\cap B_2))\ne\emptyset$.
 \end{lemma}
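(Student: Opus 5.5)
We mirror the proof of Lemma~\ref{lem-MNPD-notempty} and argue by contradiction. Suppose that for some $2$-divisions $(A_1,B_1)$ of $G_1$ and $(A_2,B_2)$ of $G_2$ we have $X\setminus((A_1\cap A_2)\cup(B_1\cap B_2))=\emptyset$. Then every vertex of $X$ lies on the same side in both divisions. Hence $A=A_1\cup A_2$ and $B=B_1\cup B_2$ form a partition of $V(G)$, with $A\cap X=A_1\cap A_2$ and $B\cap X=B_1\cap B_2$. We aim to show that $(A,B)$ is a $2$-division of $G$. Since $G$ has an edge (it is MN2D, so it must have one, as edgeless graphs are vacuously $2$-divisible), this contradicts the assumption that $G$ itself is not $2$-divisible while all its proper induced subgraphs are.

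The key step is to bound $\omega(A)$ and $\omega(B)$. Every clique $K$ of $G[A]$ lies entirely in $X\cup V_1$ or entirely in $X\cup V_2$, because $V_1$ is anticomplete to $V_2$. Hence $K\subseteq A_1$ or $K\subseteq A_2$. In the first case $K\subseteq A\cap(X\cup V_1)=A_1$, using $A\cap X=A_1\cap A_2\subseteq A_1$ and $A\cap V_1=A_1\cap V_1$. So $|K|\le\max\{\omega(A_1),\omega(A_2)\}<\max\{\omega(G_1),\omega(G_2)\}$. By Theorem~\ref{thm-basinsimplicial set-2}(1), $\omega(G_1)=\omega(G_2)=\omega(G)$, so $\omega(A)<\omega(G)$. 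The identical argument gives $\omega(B)<\omega(G)$.

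The only delicate point is the bookkeeping that $A\cap V(G_i)=A_i$ and $B\cap V(G_i)=B_i$. This holds exactly because of the hypothesis on $X$: on $V_i$ it holds by definition, and on $X$ it follows from $X\cap A_1=X\cap A_2$. Unlike the perfect division case, no perfection gluing along the clique cutset is needed, since both sides are controlled only by clique number. Therefore $(A,B)$ is a $2$-division of $G$. Together with the $2$-divisibility of all proper induced subgraphs, this makes $G$ $2$-divisible, a contradiction.
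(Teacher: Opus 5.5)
Your proof is correct and follows the same argument as the paper's. Under the hypothesis, $(A_1\cup A_2,\,B_1\cup B_2)$ partitions $V(G)$, and every clique of either part lies inside some $A_i$ or $B_i$, so this is a $2$-division of $G$; you just make explicit the bookkeeping $A\cap V(G_i)=A_i$ that the paper leaves implicit. Your appeal to Theorem~\ref{thm-basinsimplicial set-2}(1) is harmless but unnecessary, since $\omega(A_i)<\omega(G_i)\le\omega(G)$ holds trivially.
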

\begin{proof}
     If $X\setminus ((A_1\cap A_2)\cup (B_1\cap B_2))=\emptyset$, then $(A_1\cup A_2, B_1\cup B_2)$ is a partition of $V(G)$. Additionally, $A_1\cap A_2$ and $B_1\cap B_2$ are cliques since $X$ is a clique cutset, which implies that $\omega(A_1\cup A_2)<\omega(G)$ and $\omega(B_1\cup B_2)< \omega(G)$. So, $(A_1\cup A_2, B_1\cup B_2)$ forms a 2-division of $G$, a contradiction.     
\end{proof}

The proof of Lemma \ref{lem-MN2D-x} is similar to Lemma \ref{lem-MNPD-x}.
\begin{lemma}\label{lem-MN2D-x}
    There exist a 2-division $(A_i, B_i)$ of $G_i$ where $i\in\{1,2\}$ such that 
    \begin{itemize}
        \item for any $x\in X\cap A_1\cap B_2$, $\omega(B_1\cup \{x\})=\omega(G_1)$ and $\omega(A_2\cup \{x\})=\omega(G_2)$ and
        \item for any $x\in X\cap A_2\cap B_1$, $\omega(B_2\cup \{x\})=\omega(G_2)$ and $\omega(A_1\cup \{x\})=\omega(G_1)$.
    \end{itemize}
\end{lemma}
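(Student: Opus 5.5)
We argue by contradiction and follow the proof of Lemma~\ref{lem-MNPD-x}, with perfection of $A$-sides replaced by the clique-number condition of a $2$-division. Suppose that for every pair of $2$-divisions $(A_i,B_i)$ of $G_i$, $i\in\{1,2\}$, one of the two bullet conditions fails. By symmetry this means there is an $x\in X\cap A_1\cap B_2$ with $\omega(B_1\cup\{x\})<\omega(G_1)$ or $\omega(A_2\cup\{x\})<\omega(G_2)$. (The other case, with $x\in X\cap A_2\cap B_1$, is handled identically with the indices swapped.)

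The potential function is the number of ``mismatched'' vertices of $X$,
\[\mu=|X\setminus((A_1\cap A_2)\cup(B_1\cap B_2))|.\]
Start from arbitrary $2$-divisions $(A_i^0,B_i^0)$, which exist by the minimality of $G$ since each $G_i$ is a proper induced subgraph containing an edge. The only place we use $\omega(G_1)=\omega(G_2)=\omega(G)$ (from Theorem~\ref{thm-basinsimplicial set-2}(1)) is to ensure that each $G_i$ has at least one edge. Now take the offending vertex $x^0$ and move it.

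\emph{Case $\omega(B_1^0\cup\{x^0\})<\omega(G_1)$.} Replace $(A_1^0,B_1^0)$ by $(A_1^0\setminus\{x^0\},B_1^0\cup\{x^0\})$. The new $B$-side has clique number below $\omega(G_1)$ by hypothesis. The new $A$-side is a subset of the old one, so its clique number is still below $\omega(G_1)$. Hence this is again a $2$-division of $G_1$.

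\emph{Case $\omega(A_2^0\cup\{x^0\})<\omega(G_2)$.} Replace $(A_2^0,B_2^0)$ by $(A_2^0\cup\{x^0\},B_2^0\setminus\{x^0\})$. By the same reasoning this is a $2$-division of $G_2$.

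In either case $x^0$ now lies in $(A_1\cap A_2)\cup(B_1\cap B_2)$, while every other vertex of $X$ keeps its status. So $\mu$ strictly decreases.

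Iterating this step, after finitely many moves we reach $2$-divisions with $\mu=0$. This contradicts Lemma~\ref{lem-MN2D-notempty}.

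The main point to check is that each move preserves the $2$-division property on both sides. Perfection was hereditary in Lemma~\ref{lem-MNPD-x}, and here the analogous fact is that clique number is monotone under taking subsets. The side that gains $x^0$ is controlled exactly by the failed condition, and the side that loses $x^0$ only shrinks, so no step is a genuine obstacle. One should also note that removing $x^0$ cannot empty a side in a way that matters, since a $2$-division only requires the clique-number bounds.
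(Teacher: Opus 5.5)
Your proof is correct and is essentially the paper's argument. The paper proves this lemma ``similarly to Lemma~\ref{lem-MNPD-x}'', using the same descent: it moves an offending vertex $x^0$ to the matched side and decreases $|X\setminus((A_1\cap A_2)\cup(B_1\cap B_2))|$ until Lemma~\ref{lem-MN2D-notempty} is contradicted, with heredity of perfection replaced by monotonicity of $\omega$, exactly as you do.
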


%\begin{lemma}\label{lem-V}
%   Graphs attaining Vizing's bound are perfectly divisible.
%\end{lemma}

\iffalse
The following three conjectures hold under the assumption that the Four Color Conjecture is true. Conjecture \ref{con-planar2} and Conjecture \ref{con-planar3} are mutually equivalent.
\begin{conjecture}
    Let $G$ be a planar graph and $\omega(G)=3$ or $4$. If $G$ is perfectly divisible, then $\chi(G)=4$. 
\end{conjecture}
\begin{conjecture}\label{con-planar2}
     Planar graphs are perfectly divisible.   
\end{conjecture}
\begin{conjecture}\label{con-planar3}
    Let $G$ be a planar graph and $\omega(G)=3$. Then $G$ is perfectly divisible.
\end{conjecture}

\begin{problem}
    If $G$ has a perfect division, is such perfect division unique?
\end{problem}

The answer is probably no at least for perfect graphs.
\fi

Returning to the perfect divisibility of graphs, it is natural to ask what a nontrivial graph with a unique perfect division looks like. More generally, we are interested in whether every vertex of G can belong to the perfect part of some perfect division.

\begin{problem}\label{coj-pd-vAB}
    Let $G$ be a perfectly divisible graph, and $v\in V(G)$. Does there exist a perfect division $(A,B)$ of $G$ such that $v\in A?$
\end{problem}

From an affirmative answer to  Problem~\ref{coj-pd-vAB}, one can deduce that the perfect divisibility of a graph is equivalent to its perfect weight divisibility, and that each MNPD has no cutvertex (even though this is weaker than Conjecture~\ref{coj-MNPD-nocliquecutset}, but is still open). 

To see this, suppose Problem~\ref{coj-pd-vAB} admits an affirmative answer, i.e., for any perfectly divisible graph $G$ and any vertex $v$ of $G$, $G$ has a perfect division $(A, B)$ such that $v\in A$. 

First, we show the equivalence between perfect divisibility and perfect weight divisibility by proving that no MNPD graph may have  homogeneous set that is a 2-clique. Let $G$ be an MNPD graph and $X=\{x_1,x_2\}$ be a homogeneous set of $G$ with $x_1\sim x_2$. Certainly, $G-x_1$ is perfectly divisible, and so we can find a perfect division $(A,B)$ of $G-x_1$ such that $x_2\in A$ by our assumption. Consequently, $(A\cup\{x_1\},B)$ is a perfect division of $G$ since $G[A\cup\{x_1\}]$ is perfect by Theorem \ref{thm-1}, a contradiction. By Theorem \ref{thm-2}, we have that the perfect divisibility of a graph is equivalent to its perfect weight divisibility. 

Next, we show that each MNPD has no cutvertex. Let $G$ be an MNPD graph and $x$ be a cutvertex of $G$. Then $V(G)\setminus \{x\}$ can be partitioned into two nonempty sets $V_1$ and $V_2$ such that $V_1$ is anticomplete to $V_2$. Let $G_i = G[\{x\}\cup V_i]$ for $i = 1, 2$. By the minimality of $G$ and our assumption, for $i = 1, 2$, $G_i$ admits a perfect division $(A_i,B_i)$ such that $x\in A_1\cap A_2$. Obviously, $(A_1\cup A_2, B_1\cup B_2)$ is a perfect division of $G$, a contradiction. This proves that no MNPD graph may have cutvertex.
 
 %\begin{conjecture}
  %    A minimally nonperfectly divisible graph contains no cutvertex.
 %\end{conjecture}

Based on two established facts that graphs attaining Vizing's bound (i.e.,  those graphs with $\chi(G)\le \omega(G)+1$) are perfectly divisible and the chromatic number of a perfectly divisible graph is at most $\binom{\omega(G)+1}{2}$, we conclude that

\begin{theorem}\label{thm-4-critical}
    A triangle-free graph $G$ is MNPD if and only if $G$ is 4-critical.
\end{theorem}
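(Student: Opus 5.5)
The plan is to handle the two directions separately, using only two facts. First, if $\chi(H)\le\omega(H)+1$ then $H$ is perfectly divisible. Second, every perfectly divisible graph $H$ satisfies $\chi(H)\le\binom{\omega(H)+1}{2}$; for triangle-free graphs this bound is $3$. Throughout, $G$ is triangle-free, so $\omega(G)\le 2$, and every induced subgraph $H$ of $G$ has $\omega(H)\le 2$.

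\emph{Forward direction.} Suppose $G$ is MNPD.
\begin{enumerate}[label=(\roman*)]
\item $G$ has an edge, since an edgeless graph is perfect. So $\omega(G)=2$.
\item $\chi(G)\ge 4$. Otherwise $\chi(G)\le 3=\omega(G)+1$, and $G$ would be perfectly divisible by the first fact.
\item Let $H$ be any proper induced subgraph of $G$. By minimality $H$ is perfectly divisible, so $\chi(H)\le\binom{\omega(H)+1}{2}\le 3$.
\item Deleting one vertex lowers the chromatic number by at most one. Combining this with (ii) and (iii) gives $\chi(G)=4$ and $\chi(G-v)=3$ for every vertex $v$.
\end{enumerate}
So $G$ is vertex-$4$-critical, which I take to be the meaning of ``$4$-critical'' here. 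If edge-criticality is intended, I would also need $\chi(G-e)\le 3$ for every edge $e$. That does not follow from vertex removal alone. I would try to get it by taking a perfect division $(A,B)$ of $G-e$ (or of a subgraph) and arguing directly; this is the step I am least sure of, so I would fix the vertex-critical reading.

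\emph{Reverse direction.} Suppose $G$ is triangle-free and $4$-critical. Then $\chi(G)=4$ and every proper induced subgraph $H$ has $\chi(H)\le 3$.
\begin{enumerate}[label=(\roman*)]
\item For any proper induced subgraph $H$: if $\omega(H)=2$, then $\chi(H)\le 3=\omega(H)+1$, so $H$ is perfectly divisible. If $\omega(H)\le 1$, then $H$ is edgeless, hence perfect.
\item It remains to show $G$ itself is not perfectly divisible. Since $G$ has an edge, $\omega(G)=2$. If $G$ were perfectly divisible, the second fact would give $\chi(G)\le\binom{3}{2}=3$, contradicting $\chi(G)=4$.
\end{enumerate}
Hence $G$ is MNPD.

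Here perfectly divisible for $G$ means that every induced subgraph of $G$ has a perfect division; (i) covers the proper ones and (ii) shows $G$ fails. The second fact is the standard induction: split $(A,B)$, colour $A$ with $\omega$ colours and recurse on $B$.

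The main obstacle is only definitional, namely the vertex- versus edge-critical reading. Under the vertex-critical reading both directions are immediate from the two quoted facts.
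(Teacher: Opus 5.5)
Your proof is correct and follows the paper's own argument. The paper also derives the equivalence directly from the two facts: $\chi\le\omega+1$ gives perfect divisibility, and a perfectly divisible graph satisfies $\chi\le\binom{\omega+1}{2}$. Your step-by-step bookkeeping makes explicit what the paper compresses into ``we conclude''. Reading ``4-critical'' as vertex-critical is the right choice, since that is exactly what these two facts establish. Your uses of the first fact are also legitimate, even though that fact does not hold in general unless it is applied hereditarily: in the triangle-free setting the bound $\chi\le 3$ passes to every induced subgraph, and each such subgraph is either edgeless or has $\omega=2$.
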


From Theorem \ref{thm-4-critical}, we conclude that studying 4-critical graphs with clique number 2 is an inevitable path to understanding MNPD graphs. 

We end this paper with the following problem, of which the answer perhaps is no upon our intuition.
\begin{problem}
    Is there a perfectly divisible graph which is not perfectly weight divisible?
\end{problem}
\iffalse 
Furthermore, it would be interesting to investigate the existence of MNPD graphs that contain a triangle.

Currently, the Mycielski construction is a common method for constructing such graphs, and this family of graphs is denoted as \(\mathcal{M}\). Among them, the most typical example is the Mycielski graph of \(C_5\), also known as the Grötzsch graph. For a general \(C_n\), the Mycielski graph generated from it is denoted as \(M(C_n)\).  See Fig. \ref{fig-2} for two examples of such graphs.
\begin{figure}[htbp]
	\begin{center}
			\includegraphics[width=10cm]{G4.pdf}
		\end{center}
	\vskip -25pt
	\caption{Illustration of  $M(C_5)$ and $M(C_7)$.}
	\label{fig-2}
\end{figure}
\fi

\end{document}